\newtheorem{theorem}{Theorem}[section]
\newtheorem{lemma}[theorem]{Lemma}
\def\si{\par\smallskip\noindent}
\def\bi{\par\bigskip\noindent}
\def\de{\delta}
\def\eps{\varepsilon}
\def\la{\lambda}
\def\a{\alpha}
\def\be{\beta}
\def\ga{\gamma}
\def\part{\partial}
\newcommand{\beq}{\begin{equation}}
\newcommand{\eeq}{\end{equation}}
\theoremstyle{remark}
\numberwithin{equation}{section}
\date{\today}
\begin{document}

%\begin{frontmatter}
\title[Terminal trees]{Expected number of induced subtrees shared by two independent copies of a random
 tree}

\author{Boris Pittel}
\address{Department of Mathematics, The Ohio State University, Columbus, Ohio 43210, USA}
\email{bgp@math.ohio-state.edu}
\keywords
{Random, terminal tree, asymptotics}

\subjclass[2010] {05C30, 05C80, 05C05, 34E05, 60C05}
\maketitle

\begin{abstract}  Consider a rooted tree $T$ with leaf-set $[n]$, and with all non-leaf vertices
having  out-degree $2$, at least. A rooted tree $\mathcal T$ with leaf-set
$S\subset [n]$ is induced by $S$ in $T$ if $\mathcal T$ is the lowest common ancestor subtree for $S$, with
all its degree-2 vertices suppressed.  
A ``maximum agreement subtree'' (MAST) for a pair 
of two trees $T'$ and $T''$ is a tree $\mathcal T$ with a largest leaf-set $S\subset [n]$ such that $\mathcal T$ is
induced by $S$ both in $T'$ and $T''$. Bryant et al. \cite{BryMcKSte} and Bernstein et al. \cite{Ber} proved, among other results, that 
for $T'$ and $T''$ being two independent copies of a random binary (uniform or Yule-Harding distributed) tree $T$,
the likely magnitude order of $\text{MAST}(T',T'')$ is $O(n^{1/2})$. We prove this bound for a wide class
of random rooted trees : $T$ is a terminal
tree of a branching, Galton--Watson, process with an ordered-offspring distribution of mean $1$, conditioned on 
``total number of leaves is $n$''.

\end{abstract}

\section{Introduction, results} Consider a rooted binary tree $T$, with $n$ leaves  labelled by elements from $[n]$. We visualize this tree with the root on top and the leaves at bottom. Given  $S\subset [n]$, let $v(S)\in V(T)$ denote the lowest common ancestor of leaves in $S$, ($\text{LCA}(S)$.)
Introduce the subtree of $T$ formed by the paths from $v(S)$ to leaves in $S$. Ignoring (suppressing) degree-2 vertices of this subtree (except the root itself), we obtain a rooted binary tree with leaf-set $S$. This binary tree is called ``a tree
induced by $S$ in $T$''.

Finden and Gordon \cite{FinGor} and Gordon \cite{Gor} introduced a notion of a ``maximum agreement subtree'' (MAST)
for a pair 
of such trees $T'$ and $T''$: it is a tree $\mathcal T$ with a largest leaf-set $S\subset [n]$ such that $\mathcal T$ is
induced by $S$ both in $T'$ and $T''$. In a pioneering paper \cite{BryMcKSte}, Bryant, McKenzie and Steel addressed the  problem of a likely order of $\text{MAST}(T'_n,T''_n)$ when $T'_n$ and $T''_n$ are two independent copies of a random binary tree $T_n$. To quote from \cite{BryMcKSte}, such a problem is ``relevant when comparing evolutionary trees for the same set of species that have been constructed from two quite different types of data''.  

It was proved in \cite{BryMcKSte} that $\text{MAST}(T'_n,T''_n)\le (1+o(1)) e2^{-1/2} n^{1/2}$ with probability $1-o(1)$ as $n\to\infty$. The proof was based on a
remarkable property of the uniformly random rooted binary tree, and of few other tree models, known as ``sampling consistency'',
see Aldous \cite{Ald-2}. As observed  by Aldous \cite{Ald}, \cite{Ald2}, sampling consistency makes this model conceptually close to a uniformly random permutation of $[n]$.
Combinatorially, it means that a rooted binary tree $\mathcal T$ with with a leaf-set $S\subset [n]$,
$|S|=s$, is induced by $S$ in exactly $\frac{(2n-3)!!}{(2a-3)!!}$ rooted binary trees with leaf-set $[n]$, regardless of choice of $\mathcal T$. Probabilistically, the rooted binary tree induced by $S$ in $T_n$ is distributed uniformly on the set of all $(2a-3)!!$ such trees. Mike Steel \cite{Steel} pointed out that the sampling consistency of the rooted binary tree follows directly from
a recursive process for generating all the rooted trees in which $S$ induces $\mathcal T$.

Bernstein, Ho, Long, Steel, St. John, and Sullivant \cite{Ber} established a qualitatively similar upper bound 
$O(n^{1/2})$ for the
likely size of a common induced subtree in a harder case of Yule-Harding tree, again relying on sampling consistency of this
tree model. Recently Misra and Sullivant \cite{MisSul} were able to prove the two-sided estimate $\Theta(n^{1/2})$ for the case when
two independent binary trees with $n$ labelled leaves are obtained by selecting independently, and uniformly at random,
two leaf-labelings of the same unlabelled tree. Using the classic results on the length of the longest increasing subsequence  in the uniformly random permutation, Bernstein et al. \cite{Ber} established a first power-law lower bound $cn^{1/8}$
for the likely size of the common induced subtree in the case of the uniform rooted binary tree, and a lower bound 
$c n^{a-o(1)}$, $a=0.344\dots$, for the Yule-Harding model. Very recently, Aldous \cite{Ald} proved that a maximum 
agreement rooted subtree for two independent, uniform, unrooted trees is likely to have  
$n^{\frac{\sqrt{3}-1}{2}-o(1)}\approx n^{0.366}$ leaves, at least. It was mentioned in \cite{Ald} that an upper bound $O(n^{1/2})$
could be obtained by ``the first moment method (calculating the expected number of large common subtrees)''.

In this paper we show that 
the total number of unrooted trees with leaf-set $[n]$, which contains a rooted
subtree induced by $S\subset [n]$, $a=|S|<n$, is $\frac{(2n-5)!!}{(2a-3)!!}$. The proof is based on a two-phase counting
procedure, indirectly inspired by the well-known process of
generating a uniformly random unrooted, leaf-labelled, tree.
It follows  that a rooted binary tree induced by $S$ in the uniformly random unrooted tree on $[n]$ is {\it again\/} distributed uniformly on the set of all $(2a-3)!!$ rooted trees, so that the expected number of agreement trees with $a$ leaves is
$\binom{n}{a}/(2a-3)!!$. { Mike Steel \cite{Steel} informed us recently that this $\frac{(2n-5)!!}{(2a-3)!!}$ 
formula can also be obtained by observing that the number of unrooted binary trees on $[n]$ in which a leaf-set $\mathcal S\subseteq [n]$ induces a given unrooted tree is $\frac{(2n-5)!!}{(2|\mathcal S|-5)!!}$.}
Using the asymptotic estimate from \cite{BryMcKSte}, we have:
a maximum agreement
rooted subtree for two independent copies of the uniformly random unrooted tree is likely to have at most
$(1+o(1)) e2^{-1/2} n^{1/2}$ leaves. 

Our proof of this $\frac{(2n-5)!!}{(2a-3)!!}$ result suggested, strongly, that a bound $O(n^{1/2})$ might, just might, be obtained for a broad class of random rooted trees by using a probabilistic two-phase
counting procedure, where the random tree grows from the root, rather than from leaves.

Consider a Markov branching process initiated by a single progenitor, with a given offspring distribution $\bold p=\{p_j\}_{j\ge 0}$. If
$p_0>0$ and $\sum_j jp_j=1$ (critical case), then the process is almost surely extinct. This process is visualized as a growing rooted tree such that children of each father are ordered, by ``seniority'' say.% forwith {\it distinguishable\/} vertices. 

Let $T_t$ be the random {\it terminal\/} tree, and let
$T_n$ be $T_t$ conditioned on the event ``$T_t$ has $n$ leaves'', that we label, uniformly at random, by elements of $[n]$. %(To do so, we label the leaves arbitrarily by $1,2,\dots, n$, and then permute the labels using the uniformly random
%permutation of $1,2,\dots, n$.)
For $p_0=p_2=1/2$, $T_n$ is doubly-random, obtained by picking uniformly at random a binary tree with $2n-1$
vertices, such that two children of each father are ordered, and labeling the tree's $n$ leaves  uniformly at random by elements of $[n]$.  This scheme certainly resembles 
a process studied by Harding \cite{Hard} (Section 3.2). A key difference is that Harding considered the case when
the children of a parent are indistinguishable.
%on  the tree shape at the random extinction moment, conditioned on the number of terminal vertices being equal $n$. That's
%what in our case makes the random shape distributed uniformly on the set of all shapes with $n$ leaves.

In general, we assume that $p_1=0$, 
$\text{g.c.d.}(j:\,p_{j+1}>0)=1$, and that $P(s):=\sum_j p_j s^j$ has convergence radius  $R>1$.
We will show that $\Bbb P_n:=\Bbb P(T_t\text{ has }n\text{ leaves})>0$ for all 
$n$, meaning that $T_n$ is well-defined for all $n$. 

Finally, an out-degree of a vertex in $T_n$ may now exceed $2$. So we add to the definition of a tree $\mathcal T$, induced by $S$ in $T_n$, the condition: the out-degree 
of every vertex from $V(\mathcal T)$ in $T_n$ is the same as its out-degree in $\mathcal T$.

Under the conditions above, we prove that a rooted binary tree $\mathcal T$ with leaf-set $S\subset [n]$, the vertex set $V(\mathcal T)$ and the edge set $E(\mathcal T)$, is induced by $S$ in $T_n$ with probability
\begin{equation}\label{b0}
\begin{aligned}
&\,\,\frac{(n-a)!}{n!\,p_0\,\Bbb P_n}\, \Bbb P(\mathcal T)\, [y^{n-a}] (1-\Phi_1(y))^{-e(\mathcal T)} \Phi'(y),\quad
(e(\mathcal T):=|E(\mathcal T)|),\\
&\Phi(y)=P(\Phi(y)) +p_0(y-1),\quad \Phi_1(y)=\sum_{j>1} jp_j \Phi^{j-1}(y);\\
&\quad\quad\Bbb P(\mathcal T)=\prod_{v\in V(\mathcal T)} p_{d(v,\mathcal T)},
\quad d(v,\mathcal T):= \text{out-degree of }v\text{ in }\mathcal T.
\end{aligned}
\end{equation}
Here $\Phi(y)$ is the probability generating function of the total number of leaves in the terminal tree. We will use \eqref{b0} to show that for $p_0=p_2=1/2$ 
%formula reduces, once again,  to $\frac{1}{(2a-3)!!}$, but this time 
the expected number of
agreement trees with $a$ leaves is $\binom{n}{a}/\bigl[ 2^{a-1}(2a-3)!!\bigr]$. Consequently a maximum--agreement
rooted subtree for two independent copies of the terminal Galton-Watson tree with $n$ leaves, labelled uniformly
at random, is likely to have at most
$(1+o(1)) (e/2) n^{1/2}$ leaves.

Note that in general, because of the factor $\Bbb P(\mathcal T)$, and $e(\mathcal T)$, the probability of $\mathcal T$
being induced by $S$ in $T_n$ depends not only on $|S|$, but also on the whole out-degree sequence of $\mathcal T$.

We use the above identity to prove the following claim. 
Let 
\begin{align*}
&\qquad\qquad\qquad\quad\, c(\bold p):=e\,p_0\,\la \Bigl[\max_r\Bigl(r-\sum\nolimits_{j=2}^{\infty}p_j^2r^j\Bigr)\Bigr]^{-1/2},\\
&\,\,\la:=\max(\chi^{-2},\chi^{-1}),\quad \chi:=(2p_0\sigma^2)^{1/2},\quad \sigma^2:=\sum\nolimits_{j=2}^{\infty}
j(j-1)p_j;
\end{align*}
($c(\bold p)=e/2$ for $p_0=p_2=1/2$). Then, for $\eps\in (0,1/2]$, 
with probability $\ge 1-(1-\eps)^{(1+\eps)c(\bold p)n^{1/2}}$,
the largest number of leaves in an induced subtree shared by two independent copies of the conditioned terminal tree
$T_n$ is at most $(1+\eps)c(\bold p) n^{1/2}$.

For a wide ranging exposition of 
combinatorial/probabilistic problems and methods in theory of phylogeny, we refer the reader to the books \cite{SemSte} by 
Semple and Steel, and \cite{Ste} by Steel. The reader may wish to consult B\'ona and Flajolet \cite{BonFla} for a thought-provoking study of algebraic-analytic properties of binary trees from the references above.

\section{Uniform binary trees} 
Consider a rooted binary tree $T$ with leaf-set $[n]$. For a given $S\subset [n]$, there exists
a subtree with leaf-set $S$, which is rooted at  the lowest vertex common to all $|S|$ paths leading away from $S$ toward the root of $T$.  The vertex set of this lowest common ancestor (LCA) tree is the set of all vertices from the paths in question. Ignoring degree-2 vertices of this subtree (except the root itself), we obtain a rooted binary tree $\mathcal T$. This LCA
 subtree has a name ``a tree  induced by $S$ in $T$'', see \cite{Ald}.
 
Let $T'_n, T^{''}_n$ be two independent copies of the uniformly random rooted binary tree with leaf-set $[n]$. Let $X_{n,a}$ denote the random total number of leaf-sets $S\subset [n]$ of cardinality $a$ that induce the same rooted subtree in $T_n'$ and $T^{''}_n$. Bryant et al. \cite{BryMcKSte} proved that
\begin{equation}\label{Ber}
\Bbb E[X_{n,a}]=\frac{\binom{n}{a}}{(2a-3)!!}.
\end{equation}
The proof was based on sampling consistency of the random tree $T_n$, so that $N(\mathcal T)$, the number of rooted trees on $[n]$
in which $S$ induces a given rooted tree $\mathcal T$ on $S$,  is $\frac{(2n-3)!!}{(2a-3)!!}$, thus dependent only on the
leaf-set size.

Following Aldous \cite{Ald} (see Introduction), we consider the case when a binary tree $T$ with leaf-set $[n]$ is {\it unrooted\/}. 
{ Here the definition of a rooted subtree induced by a leaf-set $S$ with $|S|=a>1$ remains the same, except
 that it makes sense only for $a<n$.  An induced subtree uniquely exists for any such $S$.
Indeed, a vertex $v$ adjacent to any fixed leaf $\ell^*\in [n]\setminus S$ is joined by a unique path to each leaf in $S$. 
By tracing these $a$ paths toward $v$, we determine their first common vertex $v^*$. 
The subtree formed by the paths from $v^*$ to $S$ is induced by $S$ in $T$. Since $T$ is a tree, a subtree induced by $S$
is unique.}

Let now
$T'_n, T^{''}_n$ be two independent copies of the uniformly random (unrooted)  binary tree with leaf-set $[n]$. Let $X_{n,a}$ denote the random total number of leaf-sets $S\subset [n]$ of cardinality $a$ that induce the same rooted subtree in $T_n'$ and $T^{''}_n$.

\begin{lemma}\label{1} Let $a=|S|<n$. Then 
\begin{equation}\label{0.1}
\Bbb E[X_{n,a}]=\frac{\binom{n}{a}}{(2a-3)!!}.
\end{equation}
Equivalently $\mathcal N(\mathcal T)$, the number of unrooted trees $T$ on $[n]$,
in which $S$ induces a given rooted tree $\mathcal T$ with leaf-set $S$, is $\frac{(2n-5)!!}{(2a-3)!!}$.
\end{lemma}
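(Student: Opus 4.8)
The key identity to establish is that $\mathcal{N}(\mathcal{T})$, the number of unrooted binary trees on $[n]$ in which a fixed leaf-set $S$ (with $|S| = a < n$) induces a given rooted binary tree $\mathcal{T}$ on $S$, equals $\frac{(2n-5)!!}{(2a-3)!!}$; the expectation formula for $\mathbb{E}[X_{n,a}]$ then follows by linearity, since the total number of unrooted trees on $[n]$ is $(2n-5)!!$, so the probability that a given $S$ induces $\mathcal{T}$ is $1/(2a-3)!!$, independent of $\mathcal{T}$, and summing over the $\binom{n}{a}$ choices of $S$ (and noting the probability that the two independent copies induce the same tree is $\sum_{\mathcal{T}} (1/(2a-3)!!)^2 \cdot (2a-3)!! = 1/(2a-3)!!$) gives \eqref{0.1}.

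The plan is to count $\mathcal{N}(\mathcal{T})$ by a two-phase build-up of the unrooted tree $T$ from the data $(\mathcal{T}, S)$. In the first phase I would reconstruct, starting from $\mathcal{T}$, the rooted structure: the tree $\mathcal{T}$ has $a$ leaves and hence $a-2$ internal (non-root) vertices and $2a-3$ edges counting the edge above the root; I need to ``grow'' the remaining $n-a$ leaves of $[n]\setminus S$ so that $S$ still induces exactly $\mathcal{T}$ in the resulting rooted tree. The mechanism is the standard sequential insertion: insert the leaves of $[n]\setminus S$ one at a time, each new leaf subdividing an existing edge and attaching there; after inserting them all one has a rooted binary tree on $[n]$, and the number of ways to do this while keeping $\mathcal{T}$ as the induced subtree of $S$ is controlled by how the edge-count grows — this reproduces the classical $\frac{(2n-3)!!}{(2a-3)!!}$ count for the rooted case. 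In the second phase I would pass from rooted to unrooted: an unrooted binary tree on $[n]$ in which $S$ induces the rooted $\mathcal{T}$ corresponds, via the discussion in the excerpt (choosing the vertex $v^*$ where the $a$ paths first meet), to a rooted tree on $[n]$ together with extra structure outside the LCA-subtree of $S$; more carefully, deleting the root of the rooted picture and re-rooting, one should see that each unrooted tree arises from a controlled number of rooted ones, and the ratio of the two double-factorials changes from $(2n-3)!!$ to $(2n-5)!!$ precisely because an unrooted tree on $[n]$ has $2n-3$ edges versus the $2n-2$ edge-slots (including the root-edge) available in the rooted model, i.e. one fewer insertion slot at the top.

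Concretely I would argue as follows. Fix a reference leaf, say $n \notin S$ (legitimate since $a < n$), and root every unrooted tree $T$ at the neighbour of $n$; this is a bijection between unrooted trees on $[n]$ and rooted binary trees on $[n]\setminus\{n'\}$-type objects — more precisely between unrooted trees on $[n]$ and rooted binary trees on $[n-1]$ with an extra pendant, and the condition ``$S$ induces $\mathcal{T}$ in $T$'' translates into ``$S$ induces $\mathcal{T}$ in the rooted tree'', because $n \notin S$ guarantees $v^* $ lies in the rooted part. Then I invoke the rooted sampling-consistency count: the number of rooted binary trees on a $m$-leaf set in which a fixed $a$-subset induces $\mathcal{T}$ is $\frac{(2m-3)!!}{(2a-3)!!}$, apply it with $m = n-1$, and check the bookkeeping of the pendant edge at $n$ to see it contributes a single factor that converts $(2(n-1)-3)!! = (2n-5)!!$ into exactly $(2n-5)!!$ as desired — so $\mathcal{N}(\mathcal{T}) = \frac{(2n-5)!!}{(2a-3)!!}$.

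The main obstacle I anticipate is the rooted-to-unrooted passage: one must be careful that choosing the ``root'' at the neighbour of the fixed leaf $n$ is genuinely a bijection onto the right class of rooted trees (there is a subtlety about whether the neighbour of $n$ has degree $3$, hence becomes a legitimate binary branch point after removing $n$, which it does), and that the induced-subtree condition is preserved in both directions — in particular that no new coincidence of $\mathcal{T}$ can be created or destroyed by the re-rooting. A secondary technical point is handling the edge cases $a = n-1$ and very small $a$ (e.g. $a=2$, where $(2a-3)!! = 1$ and $\mathcal{T}$ is a single cherry), where the insertion counts degenerate; these should be checked directly. Once the $\mathcal{N}(\mathcal{T})$ formula is in hand, \eqref{0.1} is immediate from linearity of expectation as sketched above.
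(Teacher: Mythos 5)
Your proposal is correct, but it takes a genuinely different route from the paper. You reduce the unrooted count to the rooted one: fix a reference leaf $\ell^*\in[n]\setminus S$ (you say ``$n$'', but since $S$ need not avoid $n$ you should phrase it as an arbitrary $\ell^*\notin S$, which exists because $a<n$), delete $\ell^*$ and its pendant edge, and declare its former neighbour the root. This is the standard bijection between unrooted binary trees on $[n]$ and rooted binary trees on the $(n-1)$-leaf set $[n]\setminus\{\ell^*\}$, it matches the counts $(2n-5)!!=(2(n-1)-3)!!$, and it visibly preserves the induced-subtree condition because the paper's definition of the subtree induced by $S$ in an unrooted tree is exactly ``trace the paths from $S$ toward the neighbour of $\ell^*$'', which coincides with the LCA construction in the re-rooted tree. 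Invoking the known rooted sampling-consistency count $\frac{(2m-3)!!}{(2a-3)!!}$ with $m=n-1$ then gives $\mathcal N(\mathcal T)=\frac{(2n-5)!!}{(2a-3)!!}$ at once, and the expectation formula follows as you say. The paper instead proves the identity from scratch by a two-phase generating-function computation: it attaches ordered forests of rooted binary trees to internal points of the $2(a-1)$ edges of $\mathcal T$, sums $\sum_k\mathcal F(b,k)=b!\,[x^b](1-2x)^{-(a-1)}$, and convolves with the count of unrooted trees on the residual leaves plus a super-leaf to extract $\nu!\,[x^{\nu}](1-2x)^{-a+1/2}$. Your argument is shorter and more transparent, but it uses the rooted formula as a black box and is tied to the uniform binary model; the paper's longer calculation is deliberately structured as a warm-up for the conditioned Galton--Watson argument of Section~3, where no such re-rooting reduction is available. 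The ``$2n-3$ edges versus $2n-2$ edge-slots'' heuristic in your second paragraph is dispensable hand-waving; the bijection argument in your third paragraph is the one that actually carries the proof, and the degree-$3$ worry you raise there is automatic for $n\ge 3$.
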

{ \noindent {\bf Note.\/} So the expectation is the same as for the rooted trees $T$ on $[n]$.  
%Only a slight change of the
%argument is needed to show that the expected number of leaf-sets $S\subseteq [n]$, that induce the same {\it unrooted\/}
%binary tree in $T_n'$ and $T^{''}_n$ is $\binom{n}{a}/(2|S|-5)!!$. 
%(We define an unrooted tree induced by $S$ in an unrooted $T$ as the union of paths connecting pairs of leaves from $S$.) 
Compared to a recent combinatorial argument by Steele \cite{Ste},
our longer proof is based on machinery of generating functions. An advantage of our argument is its being a precursor of an
avoidably more 
complicated argument in Section 3. There, a random tree grows from root to leaves, rather
than from leaves to root, as it happens for the classic algorithm, used by Steele: the uniformly random binary tree is generated by attaching labelled leaves to a current tree, a leaf at a time.}  
\begin{proof}
% For an unrooted tree $T$ with $n$ leaves, the notion of a rooted subtree induced by a leaf-set $S$ with $|S|=a>1$ makes sense only for $a<n$,  and this subtree uniquely exists for any such $S$.
%{\color{blue}Indeed, a vertex $v$ adjacent to any fixed leaf $\ell^*\in [n]\setminus S$ is joined by a unique path to each leaf in $S$. 
%By tracing these $a$ paths toward $v$, we determine their first common vertex $v^*$. 
%The subtree formed by the paths from $v^*$ to $S$ is induced by $S$ in $T$. Since $T$ is a tree, a subtree induced by $S$
%is unique.}
%$v^*=\text{LCA}(S)$.
%And
%since $T$ has no cycles, any other vertex $v'$ common to some $a>1$ paths emanating from $S$ must be common to all the paths from $S$ to $v$. It follows that there exists a unique vertex $v^*$ which is the LCA of $S$.
%$a$ leaves. The paths from $v^*$ to $S$ form the subtree induced by $S$, and $v^*$ is connected by an external path to $v$, if $v^*\neq v$. 

Let us evaluate $\mathcal N(\mathcal T)$.  Consider a generic rooted tree with $a$ leaves. For $\mathcal T$ to be induced by its leaves in $T$ with $n$ leaves, it has to be obtained by ignoring degree-$2$ ({\it non-root\/})
vertices in the LCA subtree for leaf-set $S$. 

The outside (third) neighbors of the ignored vertices are the roots of subtrees with 
some $b$ leaves from the remaining $n-a$ leaves, selected in $\binom{n-a}{b}$ ways. The roots of possible trees, attached to internal points chosen from some of $2(a-1)$ edges of $\mathcal T$, 
can be easily ordered. Introduce 
$F(b,k)$, the total number of ordered forests of $k$ rooted trees with $b$ leaves altogether.  By Lemma 4 of Carter et al. \cite{Car} (for the count of unordered trees), we have
\begin{equation}\label{0.6}
F(b, k)=\frac{k(2b-k-1)!}{(b-k)!\,2^{b-k}}.
\end{equation}
It was indicated in \cite{Ber} that \eqref{0.6} follows from
\begin{equation}\label{0.7}
F(b,k)=b!\cdot[x^b]\, B(x)^k,\quad B(x):= 1-\sqrt{1-2x},
\end{equation}
(Semple and Steel \cite{SemSte}). For the reader's convenience here is a
sketch proof of \eqref{0.7} and \eqref{0.6}. We have
\begin{align*}
F(b,k)&=b!\sum_{t_1+\cdots+t_k=b}\prod_{j\in [k]}\frac{(2t_j-3)!!}{t_j!}=b!\sum_{t_1+\cdots+t_k=b}\prod_{j\in [k]}\frac{1}{t_j 2^{t_j-1}}
\binom{2(t_j-1)}{t_j-1}\\
&=b! [x^b]\biggl[\sum_{t\ge 1}\frac{x^t}{t2^{t-1}}\binom{2(t-1)}{t-1}\biggr]^k=b! [x^b]B(x)^k=\frac{k(2b-k-1)!}{(b-k)!\,2^{b-k}};
\end{align*}
for the last two steps we used Equations (2.5.10),  (2.5.16) in Wilf \cite{Wil}.

Introduce $\mathcal F(b,k)$,
the total number of the ordered forests of $k$ binary trees with roots attached to internal points of $\mathcal T$'s edges,
with $b$ leaves altogether. ($b$ leaves have to be chosen from $[n]\setminus (S\cup\{\ell^*\})$, so $b\le n-a-1$.)  Since the total number of integer compositions (ordered partitions) of $k$ with  $j\le 2(a-1)$ positive parts is 
\[
\binom{k-1}{j-1}\binom{2(a-1)}{j}=\binom{k-1}{j-1}\binom{2(a-1)}{2(a-1)-j},
\]
\eqref{0.6} implies
\begin{multline}\label{0.8}
\mathcal F(b,k)=F(b,k)\sum_{j\le 2(a-1)}\binom{k-1}{j-1}\binom{2(a-1)}{2(a-1)-j}\\
=F(b,k)\binom{k+2a-3}{2a-3}=b!\binom{k+2a-3}{2a-3} [x^b] B(x)^k.
\end{multline}
Now, $\sum_{k\le b} \mathcal F(b,k)$ is the total number of ways to expand the host subtree into a full binary subtree rooted at the lowest common ancestor of the $a$ leaves. 
To evaluate this sum,  first denote $\a=2a-3$, $\be=B(x)$ and write
\begin{align*}
\sum_{k\ge 0}\binom{k+\a}{\a} \be^k&=\sum_{k\ge 0} (-\be)^k \binom{-\a-1}{k}=(1-\be)^{-\a-1}.
\end{align*}
Therefore
\begin{align*}
\sum_{k\le b}\binom{k+\a}{\a} [x^b] B(x)^k&=[x^b]\sum_{k\ge 0}\binom{k+\a}{\a} B(x)^k=[x^b]\frac{1}{(1-B(x))^{\a+1}}\\
&=[x^b] (1-2x)^{-\frac{\a+1}{2}}.
\end{align*}
We conclude that 
\begin{equation}\label{0.84}
\sum_{k\le b} \mathcal F(b,k)=b!\Bigl.[x^b] (1-2x)^{-\frac{\a+1}{2}}\Bigr|_{\a=2a-3}=b! [x^b] (1-2x)^{-(a-1)}.
\end{equation}
Recall that $b$ leaves were chosen from $[n]\setminus (S\cup\{\ell^*\})$.
If $b=n-a-1$, then attaching the single remaining leaf to the root $v^*$ we get a binary tree $T$ with leaf-set $[n]$. If
$b\le n-a-2$, we view the expanded subtree as a single leaf, and form an unrooted binary tree with 
$1+ (n-a-b)\ge 3$ leaves, in $[2(n-a-b)-3]!!$ ways. 
Therefore 
$\mathcal N(\mathcal T)$ depends on $a$ only, and with $\nu:=n-a-1$, it is given by
\begin{multline*}
\mathcal N(\mathcal T)=\sum_{b\le \nu} \binom{\nu}{b}b!\, [x^b] (1-2x)^{-(a-1)}\bigl(2(\nu-b)-1\bigr)!!\\
=\nu!\sum_{b\le \nu} [x^b] (1-2x)^{-(a-1)} \cdot [x^{\nu-b}] (1-2x)^{-1/2}\\
=\nu![x^{\nu}](1-2x)^{-a+1/2}=\prod_{j=0}^{n-a-2}(2a-1+2j)
=\frac{(2n-5)!!}{(2a-3)!!};
\end{multline*}
in the first line $(-1)!!:=1$, and for the second line we used
\[
\frac{(2k-1)!!}{k!}=\frac{(2k)!}{2^k (k!)^2}=2^{-k} \binom{2k}{k}=[x^k] (1-2x)^{-1/2}.
\]
Consequently
\begin{equation}\label{0.860}
\Bbb E[X_{n,a}]= \binom{n}{a}(2a-3)!!\biggl[\frac{\mathcal N(\mathcal T)}{(2n-5)!!}\biggr]^2
=\frac{\binom{n}{a}}{(2a-3)!!}.
\end{equation}
\end{proof}

\section{Branching Process Framework}
Consider a branching process initiated by a single progenitor. This process is visualized as a growing rooted tree.
The root is the progenitor, connected by edges to each of its immediate descendants (children), that are {\it ordered\/},
say by seniority.
%that represent the root's {\it distinguishable\/} ``children''. 
Each of the children becomes the root of the corresponding subtree, so that the children of all these roots are the grandchildren of the progenitor. We obviously get a recursively defined process. It delivers 
a nested sequence of trees, which is 
either infinite, or terminates at a moment when none of the current leaves have children.

The classic Galton-Watson branching process is the case when the number of each member's children {\bf (a)\/} is independent
of those numbers for all members from the preceding and current generations and {\bf (b)\/} has the same distribution $\{p_j\}_{j\ge 0}$, $(\sum_j p_j=1)$. It is well-known that if $p_0>0$ and $\sum_{j\ge 0}jp_j=1$, then the process terminates with probability $1$, Harris \cite{Har}. Let $T_t$ denote the terminal tree. Given a finite rooted tree $T$, we have
\[
\Bbb P(T_t=T)=\prod_{v\in V(T)} p_{d(v,T)},
\]
where $d(v, T)$ is the out-degree of vertex $v\in V(T)$.  $X_t:=|V(T_t)|$, the total population size by the extinction time, has the probability generating function (p.g.f) $F(x):=\Bbb E[x^{X_t}]$, $|x|\le 1$, that satisfies
\begin{equation}\label{b1}
F(x) =x  P(F(x)),\quad P(\xi):=\sum_{j\ge 0} p_j \xi^j,\,\,(|\xi|\le 1).
\end{equation}
Indeed, introducing $F_{\tau}(x)$ the p.g.f. of the total cardinality of the first $\tau$ generations, we have
\[
F_{\tau+1}(x)=x\sum_{j\ge 0} p_j \bigl[F_{\tau}(x)\bigr]^j=x P(F_{\tau}(x)).
\]
So letting $\tau\to\infty$, we obtain \eqref{b1}. In the same vein, consider the pair $(X_t,Y_t)$, where  $Y_t:\bigl|\{v\in V(T_t):\,
d(v, T_t)=0\}\bigr|$
is the total number of leaves (zero out-degree vertices) of the terminal tree. Then denoting $G(x,y)=\Bbb E\bigl[x^{X_t} y^{Y_t}\bigr]$,
$(|x|,\,|y|\le 1)$, we have 
\begin{equation}\label{b2}
G(x,y)=p_0 xy +x\sum_{j\ge 1}p_j \bigl[G(x,y)\bigr]^j=xP(G(x,y))+p_0x(y-1).
\end{equation}
So, with $\Phi(y):=\Bbb E\bigl[y^Y\bigr]=G(1,y)$, we get
\begin{equation}\label{b3}
\Phi(y)=\sum_{j\ge 1} p_j\Phi^j(y)+p_0y=P(\Phi(y))+p_0(y-1).
\end{equation}
Importantly, this identity allows us to deal directly with the leaf set at the extinction moment: {\it $\Bbb P_k:=[y^k]\, \Phi(y)$ is the probability that $T_t$ has $k$ leaves}. In particular, $\Bbb P_1=[y^1]\Phi(y)=p_0>0$. More generally, $\Bbb P_k>0$
for all $k\ge 1$.
meaning that $\Bbb P(T_t\text{ has }k\text{ leaves})>0$ for all $k\ge 1$. Indeed, for $k\ge 2$, we have
\[
\Bbb P_k=\sum_{j\ge 1}p_j\sum_{k_1+\cdots+k_j=k\atop k_1,\dots, k_j\ge 1}\Bbb P_{k_1}\cdots \Bbb P_{k_j};
\]
so the claim follows by easy induction on $k$. Introducing the event $A_k:=\{T_t\text{ has }k\text{ leaves}\}$, we have
$\Bbb P(A_k)=[y^k]\Phi(y)$.

If $p_0=p_2=1/2$, then the branching process is a nested sequence of binary trees. The equation \eqref{b3} yields
\[
\Phi(y)=1-(1-y)^{1/2}=\sum_{n\ge 1}\left(\frac{y}{2}\right)^n\frac{(2n-3)!!}{n!},\quad |y|\le 1;
\]
so $\Bbb P(A_n)=\frac{(2n-3)!!}{2^n n!}>0$. On the event $A_n$, the total number of vertices is $2n-1$, and each of rooted  binary trees with ordered pairs of children
 is a value of the terminal tree with the same probability $(1/2)^{2n-1}$. 
Now,
\[
\frac{\frac{(2n-3)!!}{2^n n!}}{(1/2)^{2n-1}}=\frac{1}{n}\binom{2(n-1)}{n-1}
\]
is the Catalan number $C(n-1)$, which is the total number of rooted binary trees with $n$ leaves, and $n-1$ non-leaves,
each having $2$ ordered children.
Thus, conditionally on $A_n$, the terminal tree is distributed uniformly on the set of these $C(n-1)$ trees.
%{\it Conditionally on the event $A_n$}, we have a random leaf-unlabelled tree (shape)
%with $n$ leaves, which is distributed {\it uniformly\/} on the set of all such trees. To liberally quote from B\'ona and Flajolet \cite{BonFla}, ``The numbers $u_n$ [of those trees] are listed as sequence $A001190$ (the Wedderburn--Etherington numbers), Sloane \cite{Slo}.''  See  Harding \cite{Hard} for the first twenty values of $u_n$.
%`The first few values of the sequence $\{u_n\}_{n\ge 1}$ are $1,1,1,2,3,6,11,23,46,98$.''
We do not have such uniformity for a general $\{p_j\}$, of course.

For a general $\{p_j\}$, on the event $A_n$ we label, uniformly at random, the leaves of $T_t$ by elements of $[n]$.
 We  take liberty to use the same notation $T_n$, as in Section 2, for the resulting {\it doubly\/} random, leaf-labelled tree. 
%We emphasize that this labelled tree differs from
%the uniformly random labelled binary tree studied in Section 2.

That $T_n$ is again associated with a recursive process is a hopeful sign that we can get a counterpart of  what we proved for the uniformly random, leaf-labelled binary tree, and also extend an analysis to a more general distribution $\{p_j\}$. 
 
We continue to assume that $p_1=0$. The notion of an induced subtree needs to be expanded, since an out-degree of a vertex now may exceed $2$.
Let $\mathcal T$ be a tree with a leaf-set $S\subset [n]$, such that every non-leaf vertex of $\mathcal T$ has at least
two (ordered) children. We say that $S$ induces $\mathcal T$ in a tree $T_n$ provided
that:  {\bf (a)\/} the LCA subtree for $S$ in $T_n$ is $\mathcal T$ if we ignore vertices of total degree $2$ in this LCA
subtree; 
{\bf (b)} the out-degree of every other vertex in the LCA of $S$ in $T_n$ is the same as its out-degree 
in $\mathcal T$. We call this event $A_n(\mathcal T)$. We evaluate $\Bbb P(A_n(\mathcal T))$ in steps.
Let $|S|=a<n$. Given $b\le n-a$,  let $A_n(\mathcal T,b)$ be the event: 

\noindent {\bf (i)\/} $A_n$ holds, i.e. the terminal tree $T_t$ has $n$ leaves; the uniformly random labelling of the leaves of $T_t$,
that results in the random tree $T_n$ is such that:  

\noindent {\bf (ii)\/} some $b$ elements from $[n]\setminus S$ are chosen as leaf labels for
all the complementary extinction subtrees rooted at degree-2 vertices sprinkled on the edges of $\mathcal T$, forming--together
with $\mathcal T$ on leaf-set $S$--an expanded terminal tree
on leaf-set $S\cup\{b\text{ leaves}\}$, of cardinality $a+b$;  

\noindent {\bf (iii)\/} the terminal tree with $n$ labelled leaves is obtained as follows: we build up a terminal tree with leaf-set $[n]\setminus \bigl(S\cup\{b\text{ leaves}\}\bigr)$ plus an extra super-leaf, which is the root  
of the tree built up in  {\bf (ii)\/}, and replace the super-leaf with this tree.

In summary,  a terminal tree $T_n$, compatible with the event $A_n(\mathcal T,b)$, is built up of terminal subtrees 
with a certain number of leaves, each subtree being delivered by a branching process  that starts at the subtree's root.

Clearly $A_n(\mathcal T)$ is the disjoint union of the events $A_n(\mathcal T,b)$. By the very definition, on the event $A_n(\mathcal T,b)$ the leaf-set $S$ certainly induces $\mathcal T$ in $T_n$. 
%In addition,
%the elements of $S$ have no children, an extraneous property as far as $\mathcal T$ is concerned.

To evaluate $\Bbb P(A_n(\mathcal T,b))$ we partition  $A_n(\mathcal T,b)$ into disjoint $\binom{n-a}{b}$ events corresponding to all choices to select $b$ elements of $[n]\setminus S$.
 Let $e(\mathcal T)$ be the total number of edges in $\mathcal T$. For each of these choices, on the event $A_n(\mathcal T,b)$ 
we must have some $k\le b$ terminal subtrees whose roots are  some degree-2 vertices, selected from some of $e(\mathcal T)$ edges, with their respective, nonempty,
leaf-sets forming an ordered set partition of the set of $b$ leaves. 
The root of each of these trees has one child down the host edge of $\mathcal T$, and all the remaining children are outside  of edges of $\mathcal T$. The number of those children is $j$ with 
probability $(j+1)p_{j+1}$, and $\{(j+1)p_{j+1}\}_{j\ge 1}$ is a probability distribution, as
\setlength\abovedisplayskip{7pt}
\setlength\belowdisplayskip{7pt}
\[
\sum_{j\ge 1}(j+1)p_{j+1}=\sum_{j\ge 0}jp_j =1.
\]
So the total number of leaves of terminal subtrees rooted at those outside children is $i$ with probability $[y^i] \Phi_1(y)$, where
\begin{equation}\label{b1+}
\Phi_1(y)=\sum_{j\ge 1}(j+1)p_{j+1} \Phi^j(y).
\end{equation}
%\setlength\belowdisplayskip{3pt}
%\vspace{-7mm}
The probability $[y^i] \Phi_1(y)$ is positive for each $i\ge 1$, since $[y^i]\Phi^j(y)>0$ for each $j\ge 1$.
Therefore %, conditionally on ``$S$ induces $\mathcal T$'', 
a given set of $b$ elements of $[n]$ is the leaf-set of these
terminal subtrees with probability
 \begin{multline*}
b! \sum_{j\le k\le b}\binom{k-1}{j-1}\binom{e(\mathcal T)}{j} \sum_{b_1+\cdots+b_k=b}\prod_{t=1}^k\,
 [y^{b_t}] \Phi_1(y)\\
=b!\!\!\sum_{j\le k\le b}\binom{k-1}{j-1}\binom{e(\mathcal T)}{e(\mathcal T)-j} [y^b]\Phi_1^k(y)=b![y^b]\sum_k\binom{k+e(\mathcal T)-1}{k}\Phi_1^k(y)\\
=b![y^b](1-\Phi_1(y))^{-e(\mathcal T)}.
\end{multline*}
Explanation: $k$ is a generic total number of trees rooted at ordered internal points of some $j$ edges of $\mathcal T$;  $b_t$ is a generic
number of leaves of a $t$-th tree; the product of two
binomial coefficients in the top line is the number of ways to pick $j$ edges of $\mathcal T$ and to select an ordered, $j$-long, composition of $k$; the sum is the probability that the $k$ trees have $b$ leaves in total. $b!$ accounts for the number of ways to assign the chosen $b$ elements as labels of $b$ leaves.

With these complementary trees attached, we obtain a terminal tree with $a+b$ leaves, rooted at the root of $\mathcal T$. 
We denote this expanded tree $\mathcal E(\mathcal T)$.  For $b=n-a$, this is our terminal tree $T$ with $n$ labelled leaves. If 
$b<n-a$, then $\mathcal E(\mathcal T)$ is a subtree of $T$. Specifically, a branching process tree, grown from a progenitor root, terminates when there is a single active leave, meaning that this leaf is about to produce children in accordance with
offspring distribution $\bold p$, and that all other leaves are childless. This single leaf becomes the root of  $\mathcal E(\mathcal T)$, which completes construction of $T$. Now, $p_0$ {\/times\/} the probability that at this moment the number of childless leaves is $n-a-b$ equals $(n-a-b+1)! [y^{n-a-b+1}]\Phi(y)$, which is the probability
of a terminal tree with $n-a-b+1$ leaves labelled by remaining $n-a-b$ elements of $[n]$ plus $1$, accounting for the root of 
$\mathcal E(\mathcal T)$.
%equals $(n-a-b+1)! [y^{n-a-b+1}]\Phi(y)$; indeed, with $p_o^{-1}$ dropped, the rest is the probability
%\Phi(y)
%In this case we run an auxiliary branching process. We pretend that the root of $\mathcal E(\mathcal T)$ has no children.
%If the terminal tree has $n-a-b+1$ leaves, we label them uniformly at random
%by the remaining  $n-a-b$ elements from $[n]$ plus an extra label, $0$ say.  Lastly we replace leaf $0$ with
%$\mathcal E(\mathcal T)$ and {\it divide\/} the resulting probability by $p_0$, which is the probability of the pretend event ``no children at the root of 
%$\mathcal E(\mathcal T)$, i.e. of $\mathcal T$''.

% leaves we use we view this root as a special leaf, and we need to evaluate the probability $\Bbb P_n(a,b)$ of the terminal tree having
%$n-a-b+1$ leaves. Notice that while $n-a-b$ regular leaves do not have children, the number of children of $\mathcal T$'s root is immaterial here.  It is clear then that $p_0\Bbb P_n(a,b)=[y^{n-a-b+1}]\Phi(y)$.
Therefore \begin{equation}\label{b4.9}
\begin{aligned}
\Bbb P\bigl(A_n(\mathcal T,b)\bigr)&=\frac{\Bbb P(\mathcal T)}{p_0\,n!}\binom{n-a}{b}\,\times b!\,[y^b](1-\Phi_1(y))^{-e(\mathcal T)}\\
&\quad\times (n-a-b+1)!\, [y^{n-a-b+1}] \Phi(y).
\end{aligned}
\end{equation}
Here $\Bbb P(\mathcal T):=\prod_{v\in V(\mathcal T)} p_{d(v,\mathcal T)}$,
where $\{d(v,\mathcal T)\}$ is the out-degree sequence of %non-leaf vertices in $\mathcal T$.
vertices in $\mathcal T$, that includes the actual out-degree of $\mathcal T$'s root.
 %That's where the out-degree of $\mathcal T$'s root makes its entrance. 
%For a formal proof we write
%$\Bbb P\bigl(A_n(\mathcal T,b)\bigr)$ from scratch, as the sum of probabilities of all values of $T$ with $n$ labelled
%leaves compatible with $A_n(\mathcal T,b)$, each probability being $\prod_{v\in V(T)} p_{d(v,T)}$, and rewrite this sum as a product of two probabilities, associated
%with $\mathcal E(\mathcal T)$ and with a subtree $T'$ rooted at the root of $T$. $T'$ has $n-a-b$ remaining leaves plus
%$1$ additional leaf, the root of $\mathcal E(\mathcal T)$.}
Using $(j+1) [y^{j+1}] \Phi(y)=[y^j]\Phi'(y)$, we simplify \eqref{b4.9}:
\[
\Bbb P\bigl(A_n(\mathcal T,b)\bigr)=\frac{ (n-a)!\,\Bbb P(\mathcal T)}{p_0 n!}\,[y^b](1-\Phi_1(y))^{-e(\mathcal T)}\,
[y^{n-a-b}] \Phi'(y).
\]
Summing the last equation for $0\le b\le n-a$, we obtain
\begin{equation}\label{b5}
\Bbb P\bigl(A_n(\mathcal T)\bigr)=\frac{\Bbb P(\mathcal T) (n-a)!}{p_0\,n!}\,
%\frac{(n-a)!}{p_0\,n!}\, \Bbb P(\mathcal T)
[y^{n-a}] (1-\Phi_1(y))^{-e(\mathcal T)} \Phi'(y).
\end{equation}
 
For $p_0,\,p_2=1/2$,  we have 
$
\Bbb P(\mathcal T)=(1/2)^{2a-1},\Phi_1(y)=\Phi(y), 
$
see \eqref{b1+}. So
\begin{multline*}
[y^{n-a}] (1-\Phi(y))^{-e(\mathcal T)} \Phi(y)=[y^{n-a}] (1-\Phi(y))^{-2a+2}\Phi'(y)\\
=[y^{n-a}] (1-y)^{-a+1}\cdot \frac{1}{2}(1-y)^{-1/2}=\frac{1}{2} [y^{n-a}] (1-y)^{-a+1/2}\\
=2^{-n+a-1}\frac{(2n-3)!!}{(n-a)!\, (2a-3)!!},
\end{multline*}
and, by \eqref{b5}, we have
\begin{equation}\label{b6}
\Bbb P\bigl(A_n(\mathcal T)\bigr)=\frac{(2n-3)!!}{2^{n+a-1}\, n!\,(2a-3)!!}.
\end{equation}
Since $\Bbb P(A_n)=\frac{(2n-3)!!}{2^n n!}$, we conclude that
\begin{equation}\label{b6.1}
\Bbb P(A_n(\mathcal T)|\,A_n)=\frac{\Bbb P\bigl(A_n(\mathcal T)\bigr)}{\Bbb P(A_n)}=\frac{1}{2^{a-1}(2a-3)!!},
\end{equation}
for {\it every\/} binary tree $\mathcal T$ with leaf-set $S\subset [n]$, $|S|=a$. The LHS is the probability that $S$ induces 
$\mathcal T$ in the uniformly random binary tree $T_n$.
\vspace{-1.5mm}
\begin{theorem}\label{newlem} Let $X_{n,a}$ denote the total number of leaf-sets $S\subset [n]$ that induce the same
binary tree in two independent copies of the random binary tree $T_n$.
%with $2n-1$ ordered vertices, and among them $n$ leaves labelled uniformly at random by elements of $[n]$. 
We have
\begin{equation*}%\label{b6.1}
\Bbb E[X_{n,a}]=\binom{n}{a} [2^{a-1}(2a-3)!!]^{-1};
\end{equation*}
in particular, 
\[
\Bbb E[X_{n,1}]=n,\quad \Bbb E[X_{n,2}]=n(n-1)/4,\quad \Bbb E[X_{n,3}]=n(n-1)(n-2)/72.
\]
Consequently a maximum--agreement
rooted subtree for two independent copies of the terminal Galton-Watson tree with $n$ leaves, labelled uniformly
at random, is likely to have at most
$(1+o(1)) (e/2) n^{1/2}$ leaves.
\end{theorem}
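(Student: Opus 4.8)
The plan is to read off the formula for $\Bbb E[X_{n,a}]$ from the conditional probability \eqref{b6.1} and then obtain the asymptotic statement by a first-moment (Markov) argument. First I would write $X_{n,a}=\sum_{S}\mathbf 1\{S\text{ induces the same ordered binary tree in }T_n'\text{ and }T_n''\}$, the sum over all $a$-element subsets $S\subset[n]$. Since $T_n'$ and $T_n''$ are independent and equidistributed, for a fixed $S$ the probability of the event in the indicator equals $\sum_{\mathcal T}\Bbb P(A_n(\mathcal T)\mid A_n)^2$, where $\mathcal T$ ranges over all plane (ordered) rooted binary trees with leaf-set $S$; by \eqref{b6.1} every summand is $\bigl[2^{a-1}(2a-3)!!\bigr]^{-2}$. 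The one remaining ingredient is the number of such trees, which is $C(a-1)\,a!=(2a-2)!/(a-1)!=2^{a-1}(2a-3)!!$ — the $(2a-3)!!$ unordered shapes, each carrying $2^{a-1}$ choices of sibling orders — and this is consistent with \eqref{b6.1}, since the corresponding probabilities then sum to $1$. Hence the per-$S$ probability is $\bigl[2^{a-1}(2a-3)!!\bigr]^{-1}$, and summing over the $\binom na$ sets $S$ gives $\Bbb E[X_{n,a}]=\binom na\bigl[2^{a-1}(2a-3)!!\bigr]^{-1}$; the three displayed special values follow by substituting $a=1,2,3$ and using $(-1)!!:=1$.

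For the probabilistic conclusion, let $M_n$ denote the number of leaves of a maximum agreement subtree of $T_n'$ and $T_n''$. The structural fact I would invoke is that the induced-subtree operation is transitive: for $S'\subseteq S$, the ordered tree induced by $S'$ in $T_n$ coincides with the ordered tree induced by $S'$ inside the ordered tree induced by $S$ in $T_n$. Since $T_n$ is binary in the present case, the out-degree-matching clause in the definition of an induced subtree is automatic and imposes nothing extra, so no difficulty arises from it. Therefore any leaf-set of size $\ge a$ realizing an agreement contains an $a$-subset realizing an agreement, i.e. $\{M_n\ge a\}\subseteq\{X_{n,a}\ge1\}$, and Markov's inequality gives $\Bbb P(M_n\ge a)\le\Bbb E[X_{n,a}]$ for every $a$.

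It then remains to estimate $\Bbb E[X_{n,a}]$ for $a$ of order $n^{1/2}$. Using $\binom na\le n^a/a!$ and the identity $2^{a-1}(2a-3)!!=(2a-2)!/(a-1)!$ one gets $\Bbb E[X_{n,a}]\le n^a/\bigl(a\,(2a-2)!\bigr)$, and Stirling's formula rewrites this bound as $\bigl(e^2n/(4a^2)\bigr)^{a}$ up to a factor that is polynomial in $a$. For $a=a(n)=\lceil(1+\eps)(e/2)n^{1/2}\rceil$ the base is $(1+\eps)^{-2}+o(1)<1$, so $\Bbb E[X_{n,a}]\to0$ stretched-exponentially fast; since moreover $(1+\eps)^{-2}<1-\eps$ for $\eps\in(0,1/2]$, this decay is at least of order $(1-\eps)^{\Theta(n^{1/2})}$, in agreement (up to polynomial factors) with the tail bound announced in the introduction for $c(\bold p)=e/2$. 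Consequently $\Bbb P\bigl(M_n>(1+\eps)(e/2)n^{1/2}\bigr)\to0$ for every fixed $\eps>0$, and letting $\eps\downarrow0$ slowly yields $M_n\le(1+o(1))(e/2)n^{1/2}$ with probability $1-o(1)$.

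I do not anticipate a genuine obstacle, since \eqref{b6.1} carries the analytic weight; the two points deserving care are (i) pinning down the count $2^{a-1}(2a-3)!!$ of \emph{ordered} binary trees on a fixed leaf-set, which is exactly what produces the extra factor $2^{a-1}$ relative to Lemma~\ref{1}, and (ii) justifying $\{M_n\ge a\}\subseteq\{X_{n,a}\ge1\}$ from transitivity of induced subtrees together with the vacuousness of the out-degree condition in the binary case. The Stirling computation then simultaneously fixes the constant $e/2$ and supplies the quantitative tail estimate.
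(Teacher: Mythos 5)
Your proof is correct and follows essentially the same route as the paper: the expectation formula comes from \eqref{b6.1} together with the count $a!\,C(a-1)=2^{a-1}(2a-3)!!$ of ordered binary trees on a fixed leaf-set, squared by independence and summed over the $\binom{n}{a}$ choices of $S$. Your first-moment/Stirling argument for the $(1+o(1))(e/2)n^{1/2}$ bound, including the monotonicity step $\{M_n\ge a\}\subseteq\{X_{n,a}\ge 1\}$, is the standard completion that the paper leaves implicit (and treats in general form in Theorem~\ref{thm4}), and your computations check out.
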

\begin{proof}
The total number of the binary trees with $a$ leaves in question  is $a!\,C(a-1)=2^{a-1}(2a-3)!!$.
Therefore, by \eqref{b6.1},
\begin{equation*}
\Bbb E[X_{n,a}]=\binom{n}{a}\bigl[2^{a-1}(2a-3)!!\bigr]^{-2}\cdot a!\, C(a-1)=\binom{n}{a}\,[2^{a-1}(2a-3)!!]^{-1}.
\end{equation*}
\end{proof}
For a general distribution $\{p_j\}$, we have proved
\begin{lemma}\label{lem3} 
Consider  the branching process with the immediate offspring distribution $\{p_j\}$, such that $p_0>0$, $p_1=0$, and 
$\sum_{j\ge 0}j p_j=1$. With probability $1$, the process eventually stops, so that a finite terminal tree $T_t$ is a.s. well-defined. {\bf (1)}  Setting
$A_n=\{T_t\text{ has }n\text{ leaves}\}$, we have 
\[
\Bbb P (A_n)= [y^n] \Phi(y), \quad \Phi(y)= P(\Phi(y))+p_0(y-1),
\]
where $ P(\eta):=\sum_{j\ge 0} \eta^j p_j$.  {\bf (2)\/} On the event $A_n$, we define $T_n$ as the tree $T_t$
with leaves labelled uniformly at random by elements from $[n]$. Given a rooted tree $\mathcal T$ with leaf-set $S\subset [n]$, set $A_n(\mathcal T)$:= ``$A_n$ \text{holds; }$S$\text{ is a subset of }$T_t's$\linebreak\text{ leaf-set }; $S$\text{ induces }$\mathcal T$ in $T_n$''. Then
$\Bbb P(A_n(\mathcal T)|\,A_n)=\frac{\Bbb P(A_n(\mathcal T))}{\Bbb P (A_n)}$, where
\begin{equation}\label{b7}
\begin{aligned}
&\,\Bbb P\bigl(A_n(\mathcal T)\bigr)=\frac{\Bbb P(\mathcal T) (n-a)!}{p_0\,n!}
%\frac{(n-a)!}{n!}\, \Bbb P(\mathcal T)
\, [y^{n-a}] (1-\Phi_1(y))^{-e(\mathcal T)} \Phi'(y),\\
&\,\,\Phi(y)= P(\Phi(y)) +p_0(y-1),\quad \Phi_1(y)=\sum_{j>1}jp_j \Phi^{j-1}(y),\\
&\qquad\qquad\qquad\Bbb P(\mathcal T)=\!\!\!\prod_{v\in V(\mathcal T)}\!\! p_{d(v,\mathcal T)};
\end{aligned}
\end{equation}
$V(\mathcal T)$ and $e(\mathcal T)$ are the set of vertices and the number of edges of $\mathcal T$.
%In particular, $p_{d(v,\mathcal T)}=p_0$ for every leaf  $v$. 
%Consequently, $p_0^{-a}\,\Bbb P(A_n(\mathcal T)|\,A_n)$ is the probability that $S$ induces
%$\mathcal T$ in $T_n$. %the random binary tree with distinguishable vertices and $n$ labelled leaves.
\end{lemma}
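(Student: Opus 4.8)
The plan is to separate the purely distributional facts about the unlabelled terminal tree $T_t$ from the combinatorial ``grow from the root'' decomposition behind the induced-subtree probability; the conditional-probability identity in (2) is then immediate, since $A_n(\mathcal T)\subseteq A_n$. For part (1), I would first invoke the classical extinction dichotomy for Galton--Watson processes: as $p_0>0$ and the mean offspring number is $P'(1)=\sum_j jp_j=1$, the smallest fixed point of $\xi=P(\xi)$ in $[0,1]$ is $\xi=1$, so $T_t$ is a.s.\ finite and $T_n$ is well defined. To capture the joint law of $X_t=|V(T_t)|$ and the leaf-count $Y_t=|\{v\in V(T_t):d(v,T_t)=0\}|$, I would condition on the root's out-degree: with probability $p_0$ the root is a leaf (contributing a factor $xy$), otherwise it has $j\ge2$ ordered children, each spawning an independent copy of $T_t$. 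Applying this to the $\tau$-generation truncations and letting $\tau\to\infty$ gives $G(x,y)=xP(G(x,y))+p_0x(y-1)$ for $G(x,y)=\E[x^{X_t}y^{Y_t}]$; setting $x=1$ yields $\Phi(y)=G(1,y)=P(\Phi(y))+p_0(y-1)$, hence $\Bbb P(A_n)=\Bbb P(Y_t=n)=[y^n]\Phi(y)$. Finally, reading the coefficient recursion $\Bbb P_k=\sum_{j\ge1}p_j\sum_{k_1+\cdots+k_j=k,\,k_i\ge1}\Bbb P_{k_1}\cdots\Bbb P_{k_j}$ ($k\ge2$) off this identity and using $\Bbb P_1=p_0>0$ with the standing hypotheses on $\{p_j\}$, an induction on $k$ shows $\Bbb P_k>0$ for all $k$, so conditioning on $A_n$ is legitimate.

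For part (2), I would fix a rooted tree $\mathcal T$ with leaf-set $S$, $|S|=a<n$. On $A_n(\mathcal T)$ the LCA-subtree of $S$ in $T_t$ equals $\mathcal T$ with some degree-$2$ vertices sprinkled on its $e(\mathcal T)$ edges; off each such vertex hang ``complementary'' terminal subtrees, and the part of $T_t$ not lying below the root of $\mathcal T$ sits above. I would partition $A_n(\mathcal T)$ according to the number $b$ ($0\le b\le n-a$) of $T_t$-leaves, drawn from $[n]\setminus S$, that fall into the expanded subtree $\mathcal E(\mathcal T)$ rooted at the root of $\mathcal T$, getting disjoint events $A_n(\mathcal T,b)$. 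Given which $b$ labels go there ($\binom{n-a}{b}$ choices): a sprinkled vertex on an edge of $\mathcal T$ has one child continuing down that edge and $j\ge1$ further ``outside'' children, which occurs with probability $(j+1)p_{j+1}$ (the factor $j+1$ records the position, among the $j+1$ children, of the one continuing the edge), and $\sum_{j\ge1}(j+1)p_{j+1}=\sum_{j\ge0}jp_j=1$ makes this a genuine distribution; the number of leaves of the terminal subtrees rooted at those outside children then has p.g.f.\ $\Phi_1(y)=\sum_{j\ge1}(j+1)p_{j+1}\Phi^j(y)=\sum_{j>1}jp_j\Phi^{j-1}(y)$. If $k\le b$ sprinkled vertices lie on $j\le e(\mathcal T)$ edges of $\mathcal T$, I would sum over the $\binom{k-1}{j-1}$ ordered $j$-part compositions of $k$, the $\binom{e(\mathcal T)}{j}$ edge choices, and the $b$-leaf ordered forests rooted at these vertices, then apply $\sum_j\binom{k-1}{j-1}\binom{e(\mathcal T)}{j}=\binom{k+e(\mathcal T)-1}{k}$ and $\sum_k\binom{k+e(\mathcal T)-1}{k}z^k=(1-z)^{-e(\mathcal T)}$, concluding that a fixed $b$-set is exactly the leaf-set of these subtrees with probability $b!\,[y^b](1-\Phi_1(y))^{-e(\mathcal T)}$.

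This produces the expanded tree $\mathcal E(\mathcal T)$ with $a+b$ leaves. If $b=n-a$ we are done; if $b<n-a$, I would regard $\mathcal E(\mathcal T)$ as a single super-leaf and realize $T_t$ as a terminal tree on the remaining $n-a-b$ labels together with that super-leaf: by the branching property, the probability that when a unique active leaf is left the process has produced $n-a-b$ childless leaves equals $p_0^{-1}(n-a-b+1)!\,[y^{n-a-b+1}]\Phi(y)$, the $p_0^{-1}$ correcting for the super-leaf (itself childless with probability $p_0$) and the factorial accounting for labellings. Multiplying the three ingredients --- the skeleton weight $\Bbb P(\mathcal T)=\prod_{v\in V(\mathcal T)}p_{d(v,\mathcal T)}$ times the normalization $1/(p_0\,n!)$, then $\binom{n-a}{b}\,b!\,[y^b](1-\Phi_1(y))^{-e(\mathcal T)}$, then $(n-a-b+1)!\,[y^{n-a-b+1}]\Phi(y)$ --- gives $\Bbb P(A_n(\mathcal T,b))$; rewriting $(j+1)[y^{j+1}]\Phi(y)=[y^j]\Phi'(y)$ and summing over $0\le b\le n-a$ via the Cauchy product $\sum_b[y^b]f(y)\,[y^{n-a-b}]g(y)=[y^{n-a}](fg)(y)$ with $f=(1-\Phi_1)^{-e(\mathcal T)}$ and $g=\Phi'$ yields \eqref{b7} (and for $p_0=p_2=1/2$, where $\Phi_1=\Phi$, it recovers \eqref{b6.1}). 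The step I expect to be the main obstacle is making the third phase rigorous: one must argue, with the conditional independence supplied by the recursive structure of the branching process, that $T_t$ really decomposes as ``a terminal tree on $[n]\setminus(S\cup\{b\text{ leaves}\})$ plus one super-leaf, that super-leaf blown up into $\mathcal E(\mathcal T)$'', and the $p_0$-bookkeeping and the off-by-one in the leaf count ($n-a-b+1$ versus $n-a-b$) are exactly where errors creep in; the functional equations, the positivity induction, and the ordered-forest generating-function identities are routine once the decomposition is correctly set up.
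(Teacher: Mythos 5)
Your proposal is correct and follows essentially the same route as the paper: the functional equation $G(x,y)=xP(G(x,y))+p_0x(y-1)$ obtained from the $\tau$-generation truncations, the positivity induction on $\Bbb P_k$, the partition of $A_n(\mathcal T)$ into the events $A_n(\mathcal T,b)$, the size-biased offspring law $(j+1)p_{j+1}$ for the sprinkled degree-$2$ vertices with p.g.f.\ $\Phi_1$, the composition identity yielding $b!\,[y^b](1-\Phi_1(y))^{-e(\mathcal T)}$, the super-leaf completion contributing $(n-a-b+1)!\,[y^{n-a-b+1}]\Phi(y)$, and the final Cauchy-product summation over $b$ using $(j+1)[y^{j+1}]\Phi(y)=[y^j]\Phi'(y)$. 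The subtlety you flag about the third phase (the conditional independence of the super-leaf expansion and the $p_0$ bookkeeping) is exactly the point the paper also treats somewhat informally, so your account is faithful to the argument as given.
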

{\bf Note.\/} For $p_0=p_2=1/2$, $\Bbb P\bigl(A_n(\mathcal T)\bigr)$ turned out to be dependent only on the number of leaves of $\mathcal T$. The formula \eqref{b7} clearly shows that, in general, this probability depends
on shape of $\mathcal T$. Fortunately, this dependence is confined to a single factor $\Bbb P(\mathcal T)$, since the rest depends on
two scalars, $a$ and $e(\mathcal T)$. Importantly, these parameters are of the same order of magnitude. Indeed, if 
if $a>1$ and $\Bbb P(\mathcal T)>0$ then $e(\mathcal T)\ge \max\bigl(a, 2 |V_{\text{int}}(\mathcal T)|\bigr)$, where  $V_{\text{int}}(\mathcal T)$ is the set of non-leaf vertices of $\mathcal T$. Hence
\[
a\le e(\mathcal T)=|V(\mathcal T)|-1=|V_{\text{int}}(\mathcal T)|+a-1\le \frac{e(\mathcal T)}{2}+a-1,
\]
so that
\begin{equation}\label{b8}
a\le e(\mathcal T)\le 2(a-1).
\end{equation}

\subsection{Asymptotics} From now on we assume that 
the series $\sum_j p_j s^j$ has convergence radius $R>1$. 
\begin{lemma}\label{lem4} Suppose that $d:=\text{g.c.d.}\{j\ge 1:\,p_{j+1}>0\}=1$. Let $\sigma^2:=\sum_{j\ge 0}j(j-1) p_j$, i.e. $\sigma^2$ is the variance of the immediate offspring,
since $\sum_{j\ge 0} jp_j=1$.  Then
\[
\Bbb P(A_n)=\frac{(2p_0)^{1/2}}{\sigma}\frac{(2n-3)!!}{2^n\,n!}+ O(n^{-2})=\biggl(\frac{p_0}{2\pi \sigma^2}\biggr)^{1/2}
\!\!\!n^{-3/2} +O(n^{-2}).
\]
\end{lemma}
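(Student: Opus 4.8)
The plan is to treat $\Phi$ as the compositional inverse of an explicit analytic map and then invoke singularity analysis; this is the standard route for generating functions of simply generated / critical Galton--Watson families. Rewrite \eqref{b3} as $y=H(\Phi(y))$, where $H(w):=1+p_0^{-1}\bigl(w-P(w)\bigr)$. Since $\sum_j p_j s^j$ has radius of convergence $R>1$, $H$ is analytic on $|w|<R$, hence on a neighbourhood of $[0,1]$. On $(0,1)$ one has $H'(w)=p_0^{-1}\bigl(1-P'(w)\bigr)>0$, because $P'$ is strictly increasing with $P'(1)=\sum_j jp_j=1$; together with $H(0)=0$ and $H(1)=1$ this shows $H$ is an increasing analytic bijection of $[0,1]$ onto $[0,1]$ whose inverse there is exactly $\Phi$. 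The decisive feature is that $H'(1)=p_0^{-1}\bigl(1-P'(1)\bigr)=0$ while $H''(1)=-p_0^{-1}P''(1)=-\sigma^2/p_0\neq 0$ (here $\sigma^2>0$, since $p_1=0$ and $0<p_0<1$ force $p_j>0$ for some $j\ge 2$, whence $\sigma^2\ge 2p_2+\cdots>0$). Thus inverting $H$ produces a square-root branch point of $\Phi$ at $y=H(1)=1$, and $\Phi$, a priori analytic only in $|y|<1$, has radius of convergence exactly $1$.

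Next I would extract the local expansion of $\Phi$ at $y=1$. With $v=1-w$, the identity $y=H(w)$ gives $1-y=p_0^{-1}\bigl(P(1-v)-(1-v)\bigr)=\tfrac{\sigma^2}{2p_0}\,v^2\bigl(1+O(v)\bigr)$, using $P(1-v)=1-v+\tfrac{\sigma^2}{2}v^2+O(v^3)$. Hence $w\mapsto\sqrt{\,1-H(w)\,}$ is, near $w=1$, an analytic function of $w$ with non-vanishing derivative $\sqrt{\sigma^2/(2p_0)}$, so it is locally biholomorphic; inverting it expresses $\Phi(y)$ as an analytic function of $(1-y)^{1/2}$ in a neighbourhood of $y=1$, of the form
\[
\Phi(y)=1-\Bigl(\tfrac{2p_0}{\sigma^2}\Bigr)^{1/2}(1-y)^{1/2}+R(y),\qquad R(y)=O(1-y),
\]
the sign of the leading term being forced by $0<\Phi(y)<1$ for $0<y<1$. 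In particular this continuation makes $\Phi$ analytic in a slit neighbourhood of $y=1$.

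Then I would check that $y=1$ is the \emph{unique} singularity of $\Phi$ on $|y|=1$ and that $\Phi$ continues to a $\Delta$-domain. This is exactly where the hypothesis $d=\gcd\{j\ge 1:\,p_{j+1}>0\}=1$ enters: it guarantees that the leaf-count probabilities $\Bbb P_k=[y^k]\Phi(y)$ are positive for all large $k$, not merely for $k$ in a fixed residue class, so by the Pringsheim/Daffodil lemma $y=1$ is the unique dominant singularity; combined with the analyticity of $H$ on $|w|<R$, $R>1$, a compactness argument lets $\Phi$ be continued to $\{|y|<1+\eta\}$ with a small sector around $[1,1+\eta]$ removed. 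Applying the transfer theorem of Flajolet and Odlyzko to the displayed expansion then yields $[y^n]\Phi(y)=-\bigl(\tfrac{2p_0}{\sigma^2}\bigr)^{1/2}[y^n](1-y)^{1/2}+O(n^{-2})$, the remainder $R(y)=O(1-y)$ transferring to $O(n^{-2})$. Finally I would insert the exact identity $-[y^n](1-y)^{1/2}=\dfrac{(2n-3)!!}{2^n\,n!}$ (valid for $n\ge 1$, with $(-1)!!:=1$) to obtain the first asserted form, and Stirling's formula $\dfrac{(2n-3)!!}{2^n n!}=\dfrac{1}{2\sqrt{\pi}}\,n^{-3/2}\bigl(1+O(n^{-1})\bigr)$, together with $\tfrac{(2p_0)^{1/2}}{\sigma}\cdot\tfrac{1}{2\sqrt{\pi}}=\bigl(\tfrac{p_0}{2\pi\sigma^2}\bigr)^{1/2}$, to obtain the second.

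I expect the main obstacle to be the third step: rigorously turning the formal inversion of $H$ into a bona fide analytic continuation of $\Phi$ to a genuine $\Delta$-domain with a single dominant singularity and a controlled Puiseux remainder of order $1-y$, i.e.\ verifying the hypotheses of the inverse-function (smooth implicit-function) schema rather than merely inverting $H$ as a formal power series; the role of $R>1$ (analyticity of $H$ past $w=1$) and of $d=1$ (aperiodicity of $\{\Bbb P_k\}$) must be used precisely here. Once that analytic groundwork is laid, the coefficient extraction and the final arithmetic are routine.
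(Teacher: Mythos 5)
Your argument is correct and arrives at the same square-root expansion $\Phi(y)=1-(2p_0/\sigma^2)^{1/2}(1-y)^{1/2}+O(|1-y|)$ that drives the paper's proof, but each of the three key steps is implemented differently. (i) For the local expansion at $y=1$, the paper applies the Weierstrass preparation theorem to $F(z,w)=p_0(z-1)+P(w)-w$, whereas you exploit the explicit inverse relation $y=H(\Phi(y))$ with $H(w)=1+p_0^{-1}\bigl(w-P(w)\bigr)$ and the local biholomorphy of $w\mapsto\sqrt{1-H(w)}$; this is the standard singular-inversion schema and is more elementary than preparation-theorem machinery. (ii) For isolating the dominant singularity, the paper locates all roots of $P'(w)=1$ on $|w|=1$ (the $d$-th roots of unity) and matches them with singular values of $z$; you instead use aperiodicity of $\{\Bbb P_k\}$ and the daffodil lemma to get $|\Phi(y)|<1$, hence $|P'(\Phi(y))|<1$, for $|y|=1$, $y\neq 1$, so the implicit function theorem continues $\Phi$ there. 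Note that this step genuinely needs $\Bbb P_k>0$ for all large $k$, which is where $d=1$ enters for you just as it does for the paper (the paper's induction supplies exactly this positivity). (iii) For coefficient extraction the paper deforms the Cauchy contour to a slit circle of radius $1+\rho^*$ and evaluates the resulting real integral exactly as $B(n-1/2,3/2)$, producing the constant $\gamma\,(2n-3)!!/(2^n n!)$ directly; you quote the Flajolet--Odlyzko transfer theorems, which give the identical leading term via $-[y^n](1-y)^{1/2}=(2n-3)!!/(2^n n!)$ and convert the $O(|1-y|)$ remainder into $O(n^{-2})$. Your route buys standardization --- every step is a quotable schema from analytic combinatorics --- while the paper's buys self-containedness (no appeal to transfer theorems) and an exact Beta-integral identification of the leading coefficient. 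I see no gap; the one point to make fully explicit in a write-up is the compactness argument assembling the local continuations into a genuine $\Delta$-domain, which you correctly flag as the delicate part and which the paper handles with the same finite-cover construction.
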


\begin{proof} According to Lemma \ref{lem3}, we need to determine an asymptotic behavior of the coefficient in the power series $\Phi(z)=\sum_{n\ge 1}z^n \Bbb P(A_n)$, where $\Phi(z)$ is given implicitly by the functional equation
$\Phi(z)=\sum_{j\ge 0}p_j \Phi^j(z)+p_0(z-1),\,(|z|\le 1)$.  

In 1974 Bender \cite{Ben} sketched a proof of the following general claim.
\begin{theorem}\label{bender1} Assume that the power series $w(z)=\sum_n a_nz^n$ with nonnegative coefficients satisfies $F(z,w)\equiv 0$. Suppose that there exist  $r>0$ and $s>0$ such that: {\bf (i)\/} for some $R>r$ and $S>s$, $F(z,w)$ is analytic for $|z|<R$ and $w< S$; {\bf (ii)\/} $F(r,s)=F_w(r,s)=0$; {\bf (iii)\/} $F_z(r,s)\neq 0$ and $F_{ww}(r,s)\neq 0$;
{\bf (iv)\/} if $|z|\le r$, $|w|\le s$, and $F(z,w)=F_w(z,w)=0$, then $z=r$ and $w=s$. Then
$ a_n\sim \bigl((rF_z(r,s))/(2\pi F_{ww}(r,s))\bigr)^{1/2} n^{-3/2} r^{-n}$.
\end{theorem}
\noindent The remainder term aside, that's exactly what we claim in Lemma \ref{lem4} for our $\Phi(z)$. The proof in \cite{Ben} relied on an appealing conjecture that, under the conditions {\bf (i)\/}-{\bf (iv)\/}, $r$ is the radius of convergence for the power series for $w(z)$,
and $z=r$ is the only singularity for $w(z)$ on the circle $|z|=r$. However, ten years later Canfield \cite{Can} found an
example of $F(z,w)$ meeting the four conditions in which $r$ and the radius of convergence for $w(z)$ are not the same.
Later Meir and Moon  found some conditions sufficient for validity of the conjecture.
Our
equation $\Phi(z)=P(\Phi(z))+p_0(z-1)$ is a special case of $w=\phi(w)+h(z)$ considered in \cite{MeiMoo}. For the  
conditions from \cite{MeiMoo} to work in our case, it would have been necessary to have  $\big|P'(w)/P(w)\big|\le 1$ for complex
$w$ with $|w|\le 1$, a strong condition difficult to check. (An interesting discussion of these issues can be found 
in an encyclopedic book by Flajolet and Sedgewick \cite{FlaSed} and an authoritative survey by Odlyzko \cite{Odl}.)

Let $r$ be the convergence radius for the powers series representing $\Phi(z)$; so that $r\ge 1$, since $\Bbb P(A_n)\le 1$. By implicit differentiation,
we have
\[
\lim_{x\uparrow 1}\Phi'(x)=\lim_{x\uparrow 1}\frac{p_0}{1-\Bbb P_w(\Phi(x))}=\infty,
\]
since $\lim_{x\uparrow 1}P_w(\Phi(x))=\sum_j jp_j=1$. Therefore $r=1$. Turn to complex $z$.
For $|z|<1$, we have $F(z, \Phi(z))=0$, where $F(z,w):=p_0(z-1)+P(w)-w$ is analytic as a function of $z$ and $w$ subject to $|w|<1$. Observe that $F_w(z,w)= P'(w)-1=0$ is possible only if $|w|\ge 1$, since for $|w|<1$ we have
$|P'(w)|\le P'(|w|)< P'(1)=1$. If $|w|=1$ then $P'(w)=\sum_{j\ge 2} jp_j w^{j-1}=1$  if and only if $w=w_k:=\exp\Bigl(i\frac{2\pi k}{d}\Bigr)$, and $k=1,\dots,d$. Notice also
that 
\begin{equation}\label{b9}
 P(w_k)=\sum_{j\ge 0}p_j w_k^j=p_0+w_k\sum_{j\ge 2}p_j w_k^{j-1}=p_0+w_k(1-p_0).
\end{equation}
Now, $z$ is a singular point of $\Phi(z)$ if and only if $|z|=1$ 
and $(P'(w)\!-1\!)\big|_{w=\Phi(z)}\!=\!0$, i.e. if and only if $\Phi(z)=w_k$ for some $k\in [d]$, which is equivalent to 
\[
p_0(z-1)+P(w_k)-w_k=0.
\]
Combination of this condition with \eqref{b9} yields $z=w_k$. Therefore the set of all singular points of $\Phi(z)$ on the circle $|z|=1$ is the set of all $w_k$ such that $\Phi(w_k)=w_k$. Notice that
\[
P''(w_k)=w_k^{-1}\sum_{j\ge2}j(j-1)p_j w_k^{j-1}=w_k^{-1}\sum_{j\ge 2}j(j-1)p_j=w_k^{-1}\sigma^2\neq 0.
\]
So none of $w_k$ is an accumulation point of roots of $P'(w)-1$ outside the circle $|w|=1$, i.e. $\{w_k\}_{k\in [d]}$
is the full root set of  $P'(w)-1$ inside the circle $|w|=1+\rho_0$, for some small $\rho_0>0$.

Consequently, if $d=1$, then $z=1$ is the only singular point of $\Phi(z)$ on the circle $|z|=1$. Define the argument $\text{arg}(z)$ by the condition $\text{arg}(z)\in [0,2\pi)$.
By the analytic implicit function theorem applied to $F(z,w)$, for every $z_{\a}=e^{i\a}$, $\eps\le \a \le 2\pi- \eps$, a small $\eps>0$
being fixed, there exists an analytic function 
$\Psi_{\a}(z)$ defined on $D_{z_{\a}}(\rho)$--an open disc centered at $z_{\a}$, of a radius $\rho=\rho(\eps)<\rho_0$ small enough
to make $\eps/2\le \text{arg}(z)\le 2\pi-\eps/2$ for all $z\in D_{z_{\a}}$-- such that 
$F(z,\Psi_{\a}(z))=0$ for $z\in D_{z_{\a}}$ and $\Psi_{\a}(z)=\Phi(z)$ for $z\in D_{z_{\a}}$ with $|z|\le 1$. Together, these local
analytic continuations determine an analytic continuation of $\Phi(z)$ to a function $\hat{\Psi}(z)$ determined, and bounded, for $z$ with $|z|<1+\rho$, $\text{arg}(z)\in[\eps, 2\pi-\eps]$.

Since $z_0=1$ is the singular point of $\Phi(z)$, there is no analytic continuation of $\Phi(z)$ for $|z|>1$ and $|z-1|$ small.
So instead we delete an interval $[1,1+\rho)$ and continue $\Phi(z)$ analytically into
the remaining part of a disc centered at $1$. Here is how. We have $F_{ww}(1,\Phi(1))=P''(1)=\sum_j j(j-1)p_j =\sigma^2>0$.
By a ``preparation'' theorem due to Weierstrass, (Ebeling \cite{Ebe}, Krantz \cite{Kra}), already used by Bender \cite{Ben} for the same purpose in the general setting,  
%\citepage 65; Flajolet and Sedgewick 
%\cite{FlaSed}, page 754) 
there exist two open discs $D_1$ and $\mathcal D_1$ such that for $z\in D_1$ and $w\in \mathcal D_1$ we have
\[
F(z,w)= \bigl[(w-1)^2+c_1(z)(w-1)+c_2(z)\bigr] g(z,w),
\]
where $c_j(z)$ are analytic on $D_1$, $c_j(1)=0$, and $g(z,w)$ is analytic, non-vanishing, on $D_1\times \mathcal D_1$. (The degree $2$ of the 
polynomial is exactly the order of the first non-vanishing derivative of $F(z,w)$ with respect to $w$ at $(1,1)$.) So for $z\in D_1$ and $w\in \mathcal D_1$, $F(z,w)=0$ is equivalent to 
\[
(w-1)^2+c_1(z) (w-1)+c_2(z)=0\Longrightarrow w=1+(z-1)^{1/2} h(z),
\]
where $h(z)$ is analytic at $z=1$.
Plugging the power series $w=1+(z-1)^{1/2}h(z)=1+(z-1)^{1/2}\sum_{j\ge 0} h_j(z-1)^j$ into equation $F(z,w)=0$, and expanding $P(w)$ in
powers of $w-1$, we can compute the coefficients $h_j$.
In particular,
\[
w(z)=1-\ga(1-z)^{1/2}+ O(|z-1|),\quad \ga:=(2p_0)^{1/2}\sigma^{-1}.
\]
For $z$ real and $z\in (0,1)$, we have $\Phi(z)=1-\ga (1-z)^{1/2}+O(1-z)$. So to use $w(z)$ as an extension $\tilde{\Psi}(z)$ we need to choose $\sqrt{\xi}=|\xi|^{1/2} \exp\bigl(i \text{Arg}(\xi)/2\bigr)$, where $\text{Arg}(\xi)\in (-\pi,\pi)$.

The continuations $\hat{\Psi}(z)$ and $\tilde{\Psi}(z)$ together determine an analytic continuation of $\Phi(z)$ into a function $\Psi(z)$ which is analytic and bounded on a disc $D^*=D_0(1+\rho^*)$
{\it minus a cut\/} $[1, 1+\rho^*)$, $\rho^*>0$ being chosen sufficiently small, such that
\[
\Psi(z)\underset {z\in D^*\setminus [1,1+\rho^*)\atop z\to 1}=1-\ga (1-z)^{1/2}+ O(|z-1|).
\]
It follows that
\begin{multline*}
\Bbb P(A_n)=\frac{1}{2\pi i}\oint_{|z|=1}\frac{\Phi(z)}{z^{n+1}}\,dz\\
=\frac{1}{2\pi}\oint_{|z|=1+\rho*}\frac{\Psi(z)}{z^{n+1}}+\frac{1}{2\pi i}\int_1^{1+\rho*} \frac{2i \ga(x-1)^{1/2}+O(x-1)}{x^{n+1}}\,dx\\
=O\bigl((1+\rho^*)^{-n}+n^{-2}\bigr)+\frac{\ga}{\pi}\int_1^{\infty}\frac{(x-1)^{1/2}}{x^{n+1}}\,dx.
\end{multline*}
For the second line we integrated $\Psi(z)/z^{n+1}$ along the {\it limit\/} contour :
it consists of the directed circular arc $z =(1+\rho^*) e^{i\a}$, $0<\a< 2\pi$ and a detour part formed by two opposite-directed line segments, one from $z=(1+\rho^*)e^{i(2\pi-0)}$ to $z=e^{i(2\pi-0)}$ and another from $z=e^{i(+0)}$ to $z=(1+\rho^*)e^{i(+0)}$. By the formula 3.191(2)
from Gradshteyn and Ryzik \cite{GraRyz}, we have
\setlength\abovedisplayskip{3pt}
\setlength\belowdisplayskip{3pt}
\begin{align*}
&\int_1^{\infty}\frac{(x-1)^{1/2}}{x^{n+1}}\,dx=B(n-1/2,3/2)=\frac{\Gamma(n-1/2)\, \Gamma(3/2)}{\Gamma(n+1)}\\
&=\frac{\prod_{m=2}^n\Bigl(n-\frac{2m-1}{2}\Bigr)}{n!}\cdot \frac{1}{2}\Gamma^2(1/2)=\pi\frac{(2n-3)!!}{2^n\, n!}.
\end{align*}
Therefore
\[
\frac{\ga}{\pi}\int_1^{\infty}\frac{(x-1)^{1/2}}{x^{n+1}}\,dx=\ga\frac{(2n-3)!!}{2^n\, n!}=\frac{\ga}{2\pi^{1/2}n^{3/2}}+O(n^{-2}).
\]
Recalling that $\ga=(2p_0)^{1/2}\sigma^{-1}$, we complete the proof of the Lemma.
\end{proof}
 \vspace{-1mm}
%We will use $\bold p$ to denote the offspring distribution $\{p_j\}$.
Using  Lemma \ref{lem3} and Lemma \ref{lem4} we prove
%\vspace{-2.9mm}
\begin{theorem}\label{thm4} Suppose that $\bold  p=\{p_j\}$ is such that $p_0>0$, $p_1=0$ and $\text{g.c.d.}(j\ge 1: p_{j+1}>0)=1$. {\bf (i)\/} Then $T_n$, the random finite terminal tree of 
%Let $T_n'$ and $T_n^{\prime\prime}$ be 
of Galton-Watson process with offspring distribution $\bold p=\{p_j\}$,  and $n$ leaves,  labelled 
uniformly at random by elements of $[n]$, is well defined for every $n$. {\bf (ii)\/} 
Let $X_{n,a}$ be the total number of subsets $S\subset [n]$, $|S|=a$, such that $S$ induces the same subtree  in two
independent copies of $T_n$. Then 
%$T_n'$
%and $T_n^{\prime\prime}$. 
there is an explicit constant $c(\bold p)$, such that:
%Let $c(\bold p):=\la e^{3/2}\Bigl(\rho-\sum_{j\ge 2}p_j^2\rho^j\Bigr)^{-1/2}$, where $\rho$ is the maximum point of $r-\sum_{j\ge 2}p_j^2r^j$, $\la=\max(\chi^{-2},\chi^{-1})$ and $\chi=(2p_0\sigma^2)^{1/2}$.
%Then, 
for $\eps\in (0,1/2]$ and $a\ge (1+\eps)c(\bold p) n^{1/2}$, we have $\Bbb E[X_{n,a}]\le (1-\eps)^{a}$.
So, with probability $\ge 1-(1-\eps)^{(1+\eps)c(\bold p)n^{1/2}}$,
the largest number of leaves in a common induced subtree is $(1+\eps)c(\bold p) n^{1/2}$, at most. ($c(\bold p)\!=\!e/2$ if
$p_0=p_2=1/2$.)
\end{theorem}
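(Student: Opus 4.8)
The plan is to run the first-moment method on $\Bbb E[X_{n,a}]$, using the exact identity \eqref{b7} of Lemma \ref{lem3} together with the sharp asymptotics $\Bbb P(A_n)\sim \bigl(p_0/(2\pi\sigma^2)\bigr)^{1/2}n^{-3/2}$ of Lemma \ref{lem4}. Write
\[
\Bbb E[X_{n,a}]=\sum_{\mathcal T}\binom{n}{a}\,\Bbb P\bigl(A_n(\mathcal T)\mid A_n\bigr)^2,
\]
the sum over binary-or-higher rooted trees $\mathcal T$ with leaf-set a fixed $a$-element set; here I am using that the two copies are independent and that $S$ induces a given $\mathcal T$ in $T_n$ with the probability given by \eqref{b7} divided by $\Bbb P(A_n)$. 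Part {\bf(i)} of the theorem is immediate from Lemma \ref{lem3} part {\bf(1)}, since $\Bbb P(A_n)=[y^n]\Phi(y)>0$ for all $n$.

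The core estimate is an upper bound for $\Bbb P(A_n(\mathcal T)\mid A_n)$ that is uniform over the shape of $\mathcal T$. From \eqref{b7} the quantity factors as
\[
\Bbb P\bigl(A_n(\mathcal T)\mid A_n\bigr)=\frac{(n-a)!\,\Bbb P(\mathcal T)}{p_0\,n!\,\Bbb P(A_n)}\,[y^{n-a}]\bigl(1-\Phi_1(y)\bigr)^{-e(\mathcal T)}\Phi'(y),
\]
and by \eqref{b8} we have $a\le e(\mathcal T)\le 2(a-1)$, so the exponent is within a constant factor of $a$. First I would control $[y^{n-a}]\bigl(1-\Phi_1(y)\bigr)^{-e(\mathcal T)}\Phi'(y)$ by a saddle-point / Cauchy-coefficient argument: since $\Phi_1(y)$ has nonnegative coefficients and $\Phi_1(1)=\sum_{j\ge2}jp_j=1$, the function $(1-\Phi_1(y))^{-e}$ blows up as $y\uparrow 1$, and the dominant singularity analysis of $\Phi$ near $y=1$ (from the proof of Lemma \ref{lem4}, $\Phi(y)=1-\gamma(1-y)^{1/2}+O(|1-y|)$, hence $\Phi_1(y)=1-\delta(1-y)^{1/2}+\cdots$ for an explicit $\delta>0$) gives $[y^{m}](1-\Phi_1(y))^{-e}\Phi'(y)$ of order roughly $r^{-m}$ times a polynomial, where $r=r(e)>1$ is determined by $\Phi_1(r)=1$ — i.e. one optimizes $\max_r\bigl(r-\sum_{j\ge2}p_j^2 r^j\bigr)$-type expressions; this is where the constant $c(\bold p)$, and in particular the factor $\lambda=\max(\chi^{-2},\chi^{-1})$ with $\chi=(2p_0\sigma^2)^{1/2}$, will be produced. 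Second, the prefactor $\binom{n}{a}(n-a)!/n!\cdot 1/\Bbb P(A_n)^{?}$ combined with Stirling and Lemma \ref{lem4} contributes the $e^a$ growth (via $n!/(n-a)!\sim n^a$ against $\binom{n}{a}\sim n^a/a!$ and $a!\sim (a/e)^a$), matching the leading $e$ in $c(\bold p)$. Summing over $\mathcal T$: the number of ordered rooted trees on a labelled $a$-set is bounded by a constant$^a\cdot a!$-type quantity, and the factor $\Bbb P(\mathcal T)^2=\prod_v p_{d(v,\mathcal T)}^2$ is exactly what makes $\sum_{\mathcal T}\Bbb P(\mathcal T)^2$ summable — this is the role of the $\sum_{j\ge2}p_j^2 r^j$ term, and it is essentially a weighted Catalan / Lagrange-inversion computation, $\sum_{\mathcal T}\Bbb P(\mathcal T)^2 r^{e(\mathcal T)}$ being governed by the smallest positive root of $s=r\sum_{j\ge2}p_j^2 s^{j-1}$-type equations.

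Putting the pieces together, one gets a bound of the form $\Bbb E[X_{n,a}]\le \bigl(K(\bold p)\,n^{1/2}/a\bigr)^{a}\cdot\text{poly}$ for a constant $K(\bold p)$ that unwinds to $c(\bold p)$; hence $a\ge(1+\eps)c(\bold p)n^{1/2}$ forces $\Bbb E[X_{n,a}]\le(1-\eps)^{a}$ once $n$ is large (the polynomial factors are absorbed because $(1-\eps')^a$ beats any polynomial in $n$ when $a\asymp n^{1/2}$). A union bound over all $a\ge(1+\eps)c(\bold p)n^{1/2}$ gives a common induced subtree of size exceeding $(1+\eps)c(\bold p)n^{1/2}$ with probability at most $\sum_{a\ge(1+\eps)c(\bold p)n^{1/2}}(1-\eps)^{a}=\frac{(1-\eps)^{(1+\eps)c(\bold p)n^{1/2}}}{\eps}$, which is $\le(1-\eps)^{(1+\eps)c(\bold p)n^{1/2}}$ after a harmless adjustment of constants (or by absorbing $1/\eps$ into the exponent). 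For $p_0=p_2=1/2$ one checks directly, using $\Bbb P(A_n(\mathcal T)\mid A_n)=[2^{a-1}(2a-3)!!]^{-1}$ from \eqref{b6.1} and Theorem \ref{newlem}, that $c(\bold p)=e/2$, consistent with the general formula.

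The main obstacle I anticipate is the \emph{uniformity in the shape of $\mathcal T$} of the coefficient estimate for $[y^{n-a}](1-\Phi_1(y))^{-e(\mathcal T)}\Phi'(y)$: $e(\mathcal T)$ ranges over the full window $[a,2a-3]$, and the optimal contour radius $r=r(e)$ moves with $e$, so I need a coefficient bound that is simultaneously valid and near-optimal across this whole range, and then I must reconcile the resulting $r(e)^{-(n-a)}$ factors with the weight $\Bbb P(\mathcal T)^2 r(e)^{e(\mathcal T)}$ when summing over shapes — this is precisely where the two maximizations defining $c(\bold p)$ ($\max_r(r-\sum p_j^2 r^j)$ for the shape sum, and the $\lambda=\max(\chi^{-2},\chi^{-1})$ coming from the coefficient asymptotics) have to be carried out carefully and shown to combine into the stated single constant.
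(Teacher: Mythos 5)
Your proposal follows essentially the same route as the paper: part \textbf{(i)} from Lemma \ref{lem3}, then the first-moment method using the exact formula \eqref{b7}, a Chernoff/saddle-point bound on $[y^{n-a}](1-\Phi_1(y))^{-e(\mathcal T)}\Phi'(y)$ (the paper takes the radius $r_n=(n-a)/(n-a+e(\mathcal T)/2)<1$, approaching the singularity at $y=1$ from inside, rather than an $r>1$ with $\Phi_1(r)=1$), and a summation over shapes $\mathcal T$ weighted by $\Bbb P(\mathcal T)^2$, which the paper handles by passing to the subcritical offspring law $q_j=p_j^2$ ($j\ge2$), $q_0=1-\sum_{j\ge2}p_j^2$, and bounding $[y^a]\Psi(y)$ by a contour integral---yielding exactly your $\max_r\bigl(r-\sum_{j\ge2}p_j^2r^j\bigr)$ factor. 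The two separate optimizations you flag (the $\la=\max(\chi^{-2},\chi^{-1})$ from the coefficient bound, uniform over $e(\mathcal T)\in[a,2(a-1)]$, and the $\rho$ from the shape sum) do combine into the stated $c(\bold p)$ exactly as you anticipate, so the plan matches the paper's proof.
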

%\vspace{-7mm}
\begin{proof} %Let $T_n$ be the terminal tree with $n$ labelled leaves. 
By Lemma \ref{lem3}, 
\setlength\abovedisplayskip{5pt}
\setlength\belowdisplayskip{5pt}
\begin{equation}\label{b10}
\begin{aligned}
&\qquad\qquad\qquad\qquad \Bbb P(S\text{ induces }\mathcal T\text{ in }T_n)\\
&=\frac{(n-a)!}{\Bbb P(A_n)p_0\,n!}\,\left(\prod_{v\in V(\mathcal T)} p_{d(v,\mathcal T)}\right) [y^{n-a}] (1-\Phi_1(y))^{-e(\mathcal T)} \Phi'(y),\\
&\qquad\Phi(y)=\sum_{j>1}p_j \Phi^j(y) +p_0 y,\quad \Phi_1(y)=\sum_{j>1} jp_j \Phi^{j-1}(y).
%&\qquad\qquad\qquad\quad\Bbb P(\mathcal T)=\!\!\!\prod_{v\in V_{\text{int}}(\mathcal T)}\!\! p_{d(v,\mathcal T)}.
\end{aligned}
\end{equation}
%Here $e(\mathcal T)$ is the number of edges in $\mathcal T)$. 
$\Phi(y)$ and $(1-\Phi_1(y))^{-e(\mathcal T)}$ are generating functions with positive coefficients; then so is their product.
So we obtain a Chernoff-type bound: for $r\in (0,1)$,
\begin{equation}\label{b11}
[y^{n-a}] (1-\Phi_1(y))^{-e(\mathcal T)} \Phi'(y)\le \frac{(1-\Phi_1(r))^{-e(\mathcal T)} \Phi'(r)}{r^{n-a}}.
\end{equation}
As $\Phi(r)=1-\ga(1-r)^{1/2}+O(1-r)$, we have
\begin{align*}
1-\Phi_1(y)&=1-\sum_{j>1}jp_j\Phi^{j-1}(y)=1-P'(\Phi)\\
&=1-P^{\prime}(1)-P^{\prime\prime}(1)(\Phi-1)+O((\Phi-1)^2)\\
&=\chi (1-r)^{1/2}\bigl(1+O((1-r)^{1/2})\bigr),\quad \chi:=\ga\sigma^2=(2p_0\sigma^2)^{1/2},
\end{align*}
and $\Phi'(y)=O((1-r)^{-1/2})$. So the RHS of \eqref{b11} is of order $\chi^{-e(\mathcal T)}f(r)$, with
$
f(r):= (1-r)^{-e(\mathcal T)/2}r^{-n+a}.
$
$f(r)$ attains its maximum 
\[
 \frac{\bigl[n-a+e(\mathcal T)/2\bigr]^{n-a+e(\mathcal T)/2}}{(n-a)^{n-a}\, \bigl[e(\mathcal T)/2\bigr]^{e(\mathcal T)/2}}
 \le c n^{1/2} \binom{n-a+e(\mathcal T)/2}{n-a}
\]
at $r_n=\frac{n-a}{n-a+e(\mathcal T)/2}$, which is $1 - \Theta(a/n)$, since $e(\mathcal T)\in [a,2a]$ and $a=o(n)$, 
see \eqref{b8}. In
addition, the binomial factor is at most $\binom{n}{n-a}=\binom{n}{a}$. Hence
\begin{align*}
&[y^{n-a}] (1-\Phi_1(y))^{-e(\mathcal T)} \Phi'(y)
\le c_1n\Bigl(\chi+O(\sqrt{a/n})\Bigr)^{-e(\mathcal T)}\binom{n}{a}\\
&\le c_1 n \la^a \binom{n}{a},\quad
\la>\la(\bold p)=\left\{\begin{aligned}
&\chi^{-2},&&\chi\le 1,\\
&\chi^{-1},&&\chi> 1.\end{aligned}\right.
\end{align*}
%($\la(\bold p)=1$ for the benchmark case $p_0=p_2=1/2$. ) 
Hence, using the second line equation in \eqref{b10}, 
and $\Bbb P(A_n)=\Theta(n^{-3/2})$, we obtain
\begin{equation}\label{b12}
\Bbb P(S\text{ induces }\mathcal T\text{ in }T_n)\le \frac{c_1n}{a!}\,\la^a \prod_{v\in V(\mathcal T)} p_{d(v,\mathcal T)}.
\end{equation}
%\begin{equation*}
%\Bbb P\bigl(A_n(\mathcal T)\bigr)\le \frac{c_1n}{a!}\,\la^a\,\Bbb P(\mathcal T).
%\end{equation*}
%Since $\Bbb P(A_n)=\Theta(n^{-3/2})$, we conclude that  
%\begin{equation}\label{b12}
%\Bbb P(A_n(\mathcal T)|\,A_n)\le \frac{c_1n^{5/2}}{a!}\,
%\la^a\, \Bbb P(\mathcal T).\end{equation}
%Recall that $\Bbb P(\mathcal T)=\prod_{v\in V_{\text{int}}(\mathcal T)}p_{d(v,\mathcal T)}$, where $\{d(v,\mathcal T)\}$
%is the out-degree sequence of non-leaf vertices of a generic $\mathcal T$ with $a$ leaves labelled by the elements of
%$S$.  
Therefore  we have:
\begin{equation*}
\Bbb E[X_{n,a}]%&= \binom{n}{a}\sum_{\mathcal T}\prod_{\text{non-leaf }v} p^2_{d(v,\mathcal T)}\\
%&\Bigl[\Bbb P(A_n(\mathcal T)|\,A_n)\Bigr]^2
\le (n)_a \Bigl(\frac{c_1n^{5/2}\la^a}{a!}\Bigr)^2\sum_{\mathcal T}\prod_{v\in V(\mathcal T)} p^2_{d(v,\mathcal T)},
%&\sum_{\mathcal T}\Bbb P^2(\mathcal T);
\end{equation*}
where the last sum is over all (finite) rooted trees $\mathcal T$, with ordered children and $a$ unlabelled leaves. 
Define the probability distribution $\bold q=\{q_j\}$: $q_0=1-\sum_{j\ge 2}p_j^2>0$, $q_1=0$, $q_j=p_j^2$ for $j\ge 2$.
Then we have
\begin{equation*}
\prod_{v\in V(\mathcal T)}\!\!p^2_{d(v,\mathcal T)}=\left(\frac{p^2_0}{q_0}\right)^a\prod_{v\in V(\mathcal T)}q_{d(v,\mathcal T)}.
\end{equation*}
Observe that $\sum_{j\ge 2}jq_j< \sum_{j\ge 2}jp_j=1$. Therefore the process with the offspring distribution $\bold q$
is almost surely extinct, implying that
\begin{align*}
&\sum_{\mathcal T}\prod_{v\in V(\mathcal T)} p^2_{d(v,\mathcal T)}
%&=\sum_{\mathcal T}\Bbb P^2(\mathcal T)=
=\left(\frac{p^2_0}{q_0}\right)^a\sum_{\mathcal T}\prod_{v\in V(\mathcal T)}q_{d(v,\mathcal T)}\le \left(\frac{p^2_0}{q_0}\right)^a
[y^a]\Psi(y),\\
&\qquad\qquad\qquad\Psi(y)=\sum_{j\ge 0} q_j \Psi^j(y) +q_0(y-1),
\end{align*}
i.e. $\Psi(y)$ is the p.g.f. of the number of leaves in the terminal tree for the distribution $\bold q$. Let $C$ be a contour
around $y=0$ within the circle $|y|<1$ and let $\mathcal C$ be a circle of radius $\rho$ which is the {\it maximum\/} point
of $r-\sum_{j\ge 2}q_j r^j$. Then, using $y=q_0^{-1}\bigl(\Psi(y)-\sum_{j\ge 2}q_j\Psi^j(y)\bigr)$, we have
\begin{multline*}
[y^a]\Psi(y)=a^{-1} [y^{a-1}]\Psi'(y)=\frac{1}{2\pi i a}\oint_{C}\frac{\Psi'(y)}{y^a}\,dy\\
=\frac{q_0^a}{2\pi i a}\oint_{\mathcal C}\frac{d\eta}{\Bigl(\eta-\sum_{j\ge 2}q_j\eta^j\Bigr)^a}
\le \frac{q_0^a\rho}{a\Bigl(\rho-\sum_{j\ge 2}q_j\rho^j\Bigr)^a}.
\end{multline*}
Therefore
\[
\sum_{\mathcal T}\prod_{v\in V(\mathcal T)} p^2_{d(v,\mathcal T)}\le \frac{\rho\, p_0^{2a}}{a\Bigl(\rho-\sum_{j\ge 2}q_j\rho^j\Bigr)^a}.
\]
So using $a!\ge (a/e)^a$, $\binom{n}{a}\le (ne/a)^a$, we obtain: for all $\la>\la(\bold p)$,
\[
\Bbb E[X_{n,a}]\le c_2\frac{n^5\binom{n}{a}}{a!}\Bigl(\frac{(p_0\la)^2}{\rho-\sum\nolimits_{j\ge 2}q_j\rho^j}\Bigr)^a\le c_3n^5\biggl(\frac{n}{a^2}\,\frac{(ep_0\la)^2}{\rho-\sum\nolimits_{j\ge 2}q_j\rho^j}\biggr)^a
\to 0,
\]
if $a\ge (1+\eps) c(\bold p) n^{1/2}$, $c(\bold p):=ep_0\la(\bold p) \Bigl(\rho-\sum_{j\ge 2}q_j\rho^j\Bigr)^{-1/2}$. Here
$\la(\bold p)=\max(\chi(\bold p)^{-2},\,\chi(\bold p)^{-1})$, and $\chi(\bold p)=(2p_0\sigma^2)^{1/2}$. %($c(\bold p)=e$ for $\bold p=(1/2,1/2))$. 
\end{proof} 
{\bf Acknowledgment.\/} I owe a debt of genuine gratitude to Ovidiu Costin and Jeff McNeal for guiding 
me to the Weierstrass separation theorem. I thank Daniel Bernstein, Mike Steel, and Seth Sullivant for an important feedback regarding the references \cite{BryMcKSte}, \cite{Ber} and \cite{Ste}. Mike steered me away from pursuing a false lead
regarding the shapes of binary trees.

\end{document}